\newtheorem{theorem}{Theorem}
\newtheorem{proposition}{Proposition}
\newtheorem*{corollary}{Corollary}
\newtheorem*{lemma}{Lemma}
\theoremstyle{definition}
\newtheorem*{definition}{Definition}
\begin{document}

\title{Angle Bisectors of a Triangle in Lorentzian Plane}
\author{Joseph Cho}
\date{August 25, 2013}
\maketitle

\begin{abstract}
In Lorentzian geometry, limited definition of angles restricts the use of angle bisectors in study of triangles. This paper redefines angle bisectors so that they can be used to study attributes of triangles. Using the new definition, this paper investigates the existence of the incenter and the isogonal conjugate of a triangle in Lorentzian plane.
\end{abstract}

\section{Introduction}
Plane geometry is one of the oldest branches of mathematics, and one of the basic studies in geometry involves triangles and their centers. The incenter is one of the classical triangle centers, discovered by the Greeks. The incenter is found by constructing the angle bisectors from each vertex and locating the intersection of the three bisectors. In fact, it can be proven that the three angle bisectors are concurrent, using Ceva's Theorem.

In Euclidean plane, angles are well-defined through magnitude of Euclidean rotation, and angle bisectors are given an intuitive, natural definition. However, in Lorentzian plane, it is difficult to define a similar measure of angle for any two intersecting lines. Nevertheless, it can be shown that incircle exists for certain type of triangles. In this paper, we will give a definition of angle bisectors so that incenters can be found using similar methods from Euclidean plane.

Furthermore, the angle bisectors are used to find other centers of any given triangle. One of these method involves finding the isogonal conjugate of any given triangle center. It follows that using the notion of angle bisectors in this paper, Lorentzian triangle centers also have an isogonal conjugate for certain types of triangles. Therefore, we will show that the new definition of angle bisectors can be used to show the existence of isogonal conjugates in Lorentzian plane.

\section{Euclidean angle bisector}

Bulk of the similarities between Euclidean and Lorentzian plane geometry comes from the fact that both geometries can be constructed through their respective inner products. Therefore, a common technique for translating properties of Euclidean geometry into those of Lorentzian geometry involves stating Euclidean facts in terms of Euclidean inner product, then replacing the Euclidean inner product by Lorentzian inner product. Therefore, we will develop a necessary and sufficient condition for angle bisectors in terms of Euclidean inner product.

\subsection{Properties of Euclidean inner product}
First, we state few well-known facts about Euclidean inner product. Given two vectors $\vec a = (a_1,a_2)$ and $\vec b = (b_1,b_2)$, the \emph{Euclidean inner product} is defined as
$$\vec a \bullet \vec b = a_1b_1 + a_2b_2.$$
From the definition, it follows that
\begin{equation} \label{eq:enorm}
  |\vec a| = \sqrt{\vec a \bullet \vec a}
\end{equation}
Furthermore, a common yet useful property says that
\begin{equation} \label{eq:edot}
  \vec a \bullet \vec b = |\vec a||\vec b|\cos\theta
\end{equation}
where $\theta$ is the angle between vectors $\vec a$ and $\vec b$.

\subsection{Redefining Euclidean angle bisector}
Using equation (1), we may express angle bisectors in terms of Euclidean inner product.

\begin{proposition}
  Let $\vec a$ and $\vec b$ be any two non-zero vectors. Then $\vec w$ is an angle bisector of $\vec a$ and $\vec b$ if and only if
  \begin{equation} \label{eq:eab}
    \frac{\vec a \bullet \vec w}{\sqrt{\vec a \bullet \vec a}} = \frac{\vec b \bullet \vec w}{\sqrt{\vec b \bullet \vec b}}.
  \end{equation}
\end{proposition}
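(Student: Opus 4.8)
The plan is to read off the claimed identity directly from equation~\eqref{eq:edot}, which expresses the Euclidean inner product through the angle between two vectors. Write $\alpha$ for the angle between $\vec a$ and $\vec w$ and $\beta$ for the angle between $\vec b$ and $\vec w$; by definition $\vec w$ is an angle bisector of $\vec a$ and $\vec b$ exactly when $\alpha = \beta$. Before anything else I would record that $\vec a \neq \vec 0$ and $\vec b \neq \vec 0$ make $|\vec a|$ and $|\vec b|$ positive, so division by them is legitimate, and that one may assume $\vec w \neq \vec 0$ (otherwise both sides of \eqref{eq:eab} vanish trivially and there is nothing to prove).

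For the forward implication, assume $\alpha = \beta$. Applying \eqref{eq:edot} to the pairs $(\vec a, \vec w)$ and $(\vec b, \vec w)$ gives $\vec a \bullet \vec w = |\vec a|\,|\vec w|\cos\alpha$ and $\vec b \bullet \vec w = |\vec b|\,|\vec w|\cos\beta$. Dividing the first equation by $|\vec a|$, the second by $|\vec b|$, and invoking \eqref{eq:enorm} to write $|\vec a| = \sqrt{\vec a \bullet \vec a}$ and $|\vec b| = \sqrt{\vec b \bullet \vec b}$, both sides reduce to $|\vec w|\cos\alpha = |\vec w|\cos\beta$, which is exactly \eqref{eq:eab}.

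For the converse, assume \eqref{eq:eab} holds. Substituting \eqref{eq:edot} and \eqref{eq:enorm} as above turns the hypothesis into $|\vec w|\cos\alpha = |\vec w|\cos\beta$; cancelling the positive scalar $|\vec w|$ leaves $\cos\alpha = \cos\beta$. Since the angle between two vectors is measured in $[0,\pi]$, where cosine is strictly decreasing and hence injective, this forces $\alpha = \beta$, i.e. $\vec w$ bisects the angle between $\vec a$ and $\vec b$.

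I expect the converse to be the only step that needs care: recovering $\alpha = \beta$ from $\cos\alpha = \cos\beta$ depends essentially on the convention that the angle appearing in \eqref{eq:edot} is taken in $[0,\pi]$, and this should be stated explicitly rather than left implicit. It is also worth a parenthetical remark that \eqref{eq:eab} picks out the direction of the \emph{internal} bisector; the external bisector satisfies the same relation with a sign change on one side. Beyond that, the argument is a routine substitution carried out symmetrically in both directions.
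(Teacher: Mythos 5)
Your proof is correct and follows essentially the same route as the paper: both reduce \eqref{eq:eab} via \eqref{eq:enorm} and \eqref{eq:edot} to the equation $\cos\alpha = \cos\beta$ and then invoke the injectivity of cosine on the range of angles to conclude $\alpha = \beta$. Your version merely spells out the two directions separately and handles the $\vec w = \vec 0$ and $|\vec w|$-cancellation details a bit more explicitly.
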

\begin{proof}
  Let $\theta$ be the angle between $\vec a$ and $\vec w$ and $\varphi$ between $\vec b$ and $\vec w$. Note that \eqref{eq:enorm} and \eqref{eq:edot} implies that \eqref{eq:eab} is equivalent to the equation
  $$\cos\theta = \cos\varphi.$$
  However, since $0 \leq \theta, \varphi < \pi$, and cosine function is injective on the interval, the above equation is equivalent to
  $$\theta = \varphi.$$
  Therefore, (3) is equivalent to the fact that $\vec w$ is an angle bisector of $\vec a$ and $\vec b$.
\end{proof}
\noindent The expression \eqref{eq:eab} in Proposition 1 comes from the fact that orthogonal projection of unit vectors onto the angle bisector are equal, as shown in Figure \ref{fig:refl}. Note that in the figure, $\vec \alpha$ and $\vec \beta$ are unit vectors of $\vec a$ and $\vec b$, respectively.

\begin{figure}[ht]
    \centering
    \includegraphics[width=0.9\textwidth]{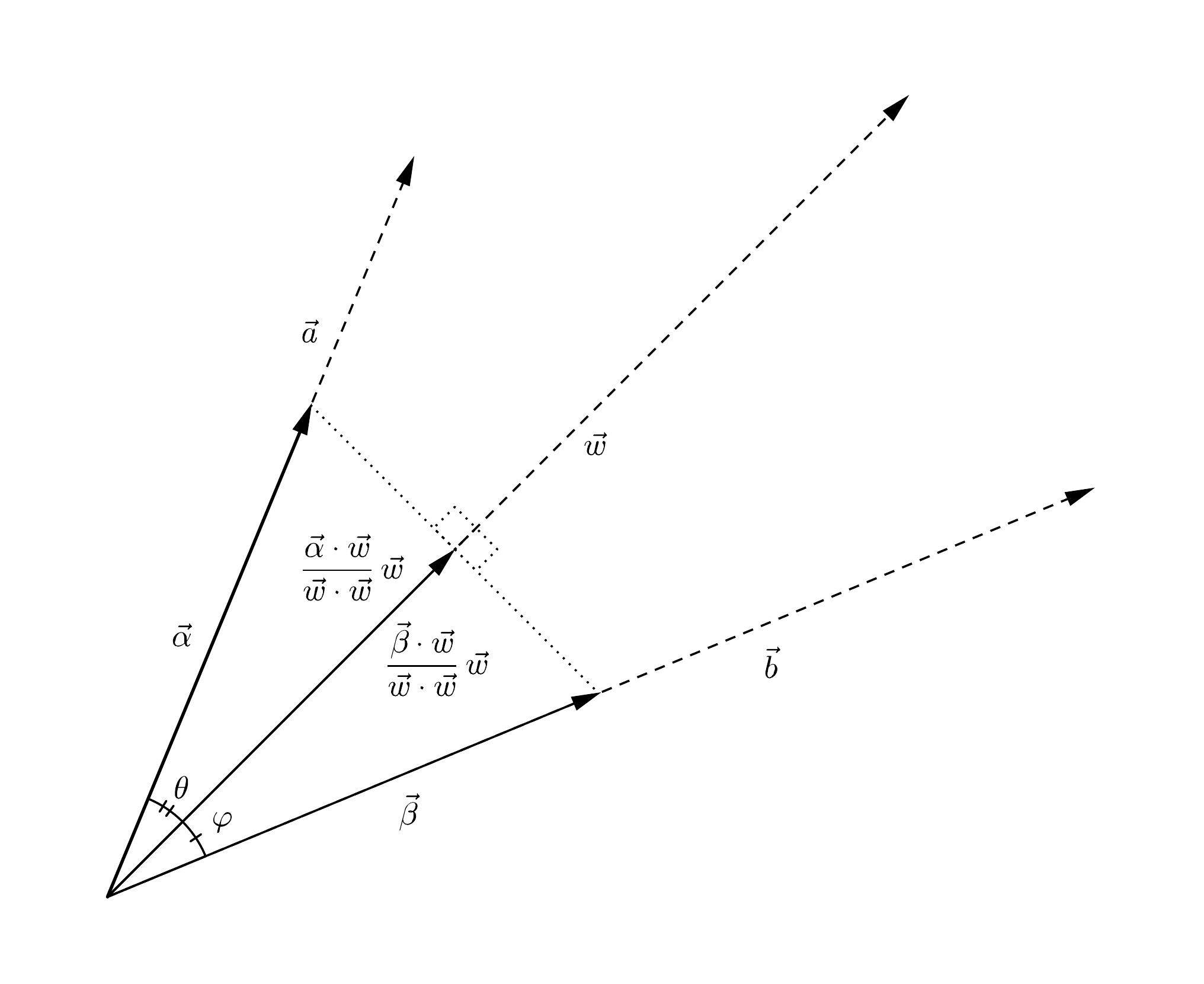}
    \caption{Geometrical representation of Proposition 1 \label{fig:refl}}
\end{figure}

\section{Lorentzian angle bisector}
As mentioned above, bulk of the technique of studying Lorentzian plane geometry comes from expressing concepts of Euclidean geometry in terms of Euclidean inner product, then simply switching the inner product to that of Lorentzian plane. Therefore, we must define the Lorentzian inner product. Furthermore, to address the need for a convoluted definition of angle bisectors, we define angles in Lorentzian plane.

\subsection{Lorentzian inner product and rotation}
Given two vectors $\vec a$ and $\vec b$ as above, the \emph{Lorentzian inner product} between $\vec a$ and $\vec b$ is defined as
$$\vec a \circ \vec b = a_1b_1 - a_2b_2,$$
and the \emph{Lorentzian magnitude} of a vector $\vec a$ given as
\begin{equation} \label{eq:lmag}
\|\vec a\| = \sqrt{|\vec a \circ \vec a|}.
\end{equation}
Note that (5) is different from the \emph{Lorentzian length} of a vector $\vec a$, defined as $$|\vec a| = \sqrt{\vec a \circ \vec a}.$$
Different from Euclidean distance, the squared Lorentzian distance may be positive, zero, or negative. We call non-zero vectors with positive, zero, or negative squared Lorentzian distance spacelike, lightlike, or timelike respectively. Such attribute of a Lorentzian vector is called the \emph{causal character}.

Since circles in Lorentzian plane is in the form of a Euclidean hyperbola, the rotation matrix takes the form of
\begin{equation} \label{eq:rotation}
  A[\varphi] = \begin{pmatrix}
\cosh\varphi & \sinh\varphi\\
\sinh\varphi & \cosh\varphi
\end{pmatrix}.
\end{equation}
By the evenness of hyperbolic cosine function and oddness of hyperbolic sine function,
\begin{equation} \label{eq:rotation2}
  (A[\varphi])^{-1} = A[-\varphi] = \begin{pmatrix}
\cosh\varphi & -\sinh\varphi\\
-\sinh\varphi & \cosh\varphi
\end{pmatrix}.
\end{equation}
One may immediately notice that given any two unit vectors, vectors with magnitude of one, the existence of a rotation taking one vector to another is not guaranteed. Therefore, the angle between two vectors is only defined when such a rotation exists between two vectors. Since the angle is not defined for any two vectors, the definition of angle bisectors applies to a very limited set. However, adopting the inner product approach of angle bisectors allows us to expand the scope of the definition.

\subsection{Lorentzian angle bisector}
Using the inner product definition of Euclidean angle bisector, we may now define Lorentzian angle bisector.
\begin{definition}
  Let $\vec a$ and $\vec b$ be two non-zero vectors. If there is some vector $\vec w$ such that
  \begin{equation} \label{eq:lab}
    \frac{\vec a \circ \vec w}{\sqrt{|\vec a \circ \vec a|}} = \frac{\vec b \circ \vec w}{\sqrt{|\vec b \circ \vec b|}}
  \end{equation}
  then we say $\vec w$ is the \emph{angle bisector} of $\vec a$ and $\vec b$.
\end{definition}
\noindent It should be noted that if $\vec\alpha$ and $\vec\beta$ are unit vectors of $\vec a$ and $\vec b$, respectively, then \eqref{eq:lab} may be rewritten as
$$\vec\alpha \circ \vec w = \vec\beta \circ \vec w.$$

It is easy to see that if two vectors $\vec w_1$ and $\vec w_2$ are both angle bisectors of a same pair of vector, then they are scalar multiples of each other. Also, note that the definition implies that the necessary condition for two vectors to have a angle bisector is to have the same causal character. Now, we will show that having the same causal character is also the sufficient condition.

\begin{proposition}
  Let $\vec a$ and $\vec b$ be two non-zero vectors sharing the same causal character. Let $\vec\alpha$ and $\vec\beta$ be the unit vectors of $\vec a$ and $\vec b$, respectively. Then $\vec\alpha + \vec\beta$ is an angle bisector of $\vec a$ and $\vec b$.
\end{proposition}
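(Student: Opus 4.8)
The plan is to reduce everything to the reformulation of \eqref{eq:lab} noted immediately after the Definition: with $\vec\alpha$, $\vec\beta$ the unit vectors of $\vec a$, $\vec b$ and $\vec w = \vec\alpha + \vec\beta$, the statement that $\vec w$ is an angle bisector of $\vec a$ and $\vec b$ is equivalent to the single scalar identity
$$\vec\alpha \circ (\vec\alpha + \vec\beta) = \vec\beta \circ (\vec\alpha + \vec\beta).$$
So the proof amounts to checking this one equation.

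Next I would expand both sides using bilinearity of the Lorentzian inner product: the left side is $\vec\alpha\circ\vec\alpha + \vec\alpha\circ\vec\beta$ and the right side is $\vec\beta\circ\vec\alpha + \vec\beta\circ\vec\beta$. Since $\circ$ is symmetric, the cross terms $\vec\alpha\circ\vec\beta$ and $\vec\beta\circ\vec\alpha$ agree and cancel, so the identity collapses to $\vec\alpha\circ\vec\alpha = \vec\beta\circ\vec\beta$.

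It remains to establish this last equality, and this is the only place where the hypothesis is used. Because $\vec\alpha$ is a unit vector, $\|\vec\alpha\| = 1$, hence $|\vec\alpha\circ\vec\alpha| = 1$ by \eqref{eq:lmag}; moreover $\vec\alpha$ has the same causal character as $\vec a$, so $\vec\alpha\circ\vec\alpha = 1$ if $\vec a$ is spacelike and $\vec\alpha\circ\vec\alpha = -1$ if $\vec a$ is timelike (the lightlike case cannot occur, since a lightlike vector admits no unit vector). The same holds for $\vec\beta$ relative to $\vec b$, and since $\vec a$ and $\vec b$ share a causal character, $\vec\alpha\circ\vec\alpha$ and $\vec\beta\circ\vec\beta$ take the common value $\pm 1$. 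Thus $\vec\alpha\circ\vec\alpha = \vec\beta\circ\vec\beta$, which completes the argument.

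I do not anticipate a genuine obstacle: once the unit-vector reformulation is invoked the computation is immediate. The only points needing care are bookkeeping: being explicit that "unit vector" forces $\vec\alpha\circ\vec\alpha = \pm 1$ with the sign pinned down by the shared causal character (not merely $|\vec\alpha\circ\vec\alpha|=1$), and perhaps flagging the degenerate case $\vec\beta = -\vec\alpha$, in which $\vec\alpha+\vec\beta$ is the zero vector and satisfies \eqref{eq:lab} only trivially; if one insists that a bisector be a nonzero direction, this case should be noted or excluded.
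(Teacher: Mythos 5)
Your proof is correct and follows essentially the same route as the paper: both arguments hinge on the identity $\vec\alpha\circ\vec\alpha = \vec\beta\circ\vec\beta$ (forced by the shared causal character of the unit vectors) and the cancellation of the symmetric cross term $\vec\alpha\circ\vec\beta$; you simply run the computation in the reverse direction. Your added observations --- that the common value is $\pm 1$ with the sign pinned down, and that $\vec\beta=-\vec\alpha$ makes the bisector degenerate --- are correct refinements the paper leaves implicit.
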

\begin{proof}
  Since $\vec a$ and $\vec b$ have the same causal character, it follows that
  $$\vec\alpha \circ \vec\alpha = \vec\beta \circ \vec\beta.$$
  By adding $\vec\alpha \circ \vec\beta$ on both sides, we get
  $$\vec\alpha \circ (\vec\alpha + \vec\beta) = \vec\beta \circ (\vec\alpha + \vec\beta)$$
  which can be rewritten as
  $$\frac{\vec a \circ (\vec\alpha + \vec\beta)}{\sqrt{|\vec a \circ \vec a|}} = \frac{\vec b \circ (\vec\alpha + \vec\beta)}{\sqrt{|\vec b \circ \vec b|}}.$$
\end{proof}

\begin{corollary}
  Let $\vec a$ and $\vec b$ be two non-zero vectors sharing the same causal character and the same orientation. If $\vec w$ is an angle bisector of $\vec a$ and $\vec b$, then $\vec w$ also have the same causal character and orientation.
\end{corollary}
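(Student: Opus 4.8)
The plan is to reduce the whole statement to a single computation about the distinguished bisector $\vec\alpha+\vec\beta$ produced by Proposition 2. By Proposition 2, $\vec\alpha+\vec\beta$ is an angle bisector of $\vec a$ and $\vec b$, and by the observation preceding it, any two angle bisectors of the same pair of vectors are scalar multiples of one another. Hence there is a nonzero scalar $\lambda$ with $\vec w=\lambda(\vec\alpha+\vec\beta)$. Since $\vec w\circ\vec w=\lambda^{2}\,(\vec\alpha+\vec\beta)\circ(\vec\alpha+\vec\beta)$, the causal character of $\vec w$ is exactly that of $\vec\alpha+\vec\beta$; and for $\lambda>0$ — the internal bisector, which is the one that will matter for the incenter construction — the orientation of $\vec w$ also agrees with that of $\vec\alpha+\vec\beta$. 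So everything comes down to showing that $\vec\alpha+\vec\beta$ itself has the same causal character and orientation as $\vec a$ and $\vec b$.

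For this I would expand $(\vec\alpha+\vec\beta)\circ(\vec\alpha+\vec\beta)=\vec\alpha\circ\vec\alpha+2\,(\vec\alpha\circ\vec\beta)+\vec\beta\circ\vec\beta=2\bigl(\vec\alpha\circ\vec\alpha+\vec\alpha\circ\vec\beta\bigr)$, using $\vec\alpha\circ\vec\alpha=\vec\beta\circ\vec\beta=\pm1$ (the sign being $-1$ in the timelike case and $+1$ in the spacelike case). The key is then to determine the sign of $\vec\alpha\circ\vec\beta$, which I would do by parametrizing the unit vectors on the relevant branch of the unit ``circle'': for future-oriented timelike vectors, $\vec\alpha=(\sinh s,\cosh s)$ and $\vec\beta=(\sinh t,\cosh t)$ give $\vec\alpha\circ\vec\beta=\sinh s\sinh t-\cosh s\cosh t=-\cosh(s-t)\le-1$, while for right-oriented spacelike vectors one interchanges $\sinh$ and $\cosh$ and gets $\vec\alpha\circ\vec\beta=\cosh(s-t)\ge1$; the past- and left-oriented cases only introduce overall signs that cancel. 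In every case $\vec\alpha\circ\vec\beta$ has the same sign as $\vec\alpha\circ\vec\alpha$, so $(\vec\alpha+\vec\beta)\circ(\vec\alpha+\vec\beta)=2(\vec\alpha\circ\vec\alpha)\bigl(1+\cosh(s-t)\bigr)$ is nonzero and has that same sign, giving $\vec\alpha+\vec\beta$ the same causal character as $\vec a,\vec b$. The orientation is then immediate: the distinguished coordinate of $\vec\alpha+\vec\beta$ (the second coordinate in the timelike case, the first in the spacelike case) is the sum of two quantities of the same sign, hence nonzero and of that sign.

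I expect the sign determination of $\vec\alpha\circ\vec\beta$ to be the only real obstacle, and it is precisely here that the equal-orientation hypothesis is essential rather than cosmetic: if $\vec a$ were future-timelike and $\vec b$ past-timelike, the same parametrization yields $\vec\alpha\circ\vec\beta=+\cosh(s+t)\ge1$, so $(\vec\alpha+\vec\beta)\circ(\vec\alpha+\vec\beta)=-2+2\cosh(s+t)\ge0$ can vanish (when $s=-t$), and the bisector degenerates to a lightlike vector. I would record this as a remark so that the necessity of both hypotheses is transparent.
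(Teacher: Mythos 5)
Your proof is correct and follows the same route the paper itself (implicitly) takes: reduce to the distinguished bisector $\vec\alpha+\vec\beta$ via the scalar-multiple observation, then check its causal character and orientation directly. You actually carry out the sign computation for $\vec\alpha\circ\vec\beta$ that the paper only asserts, and your handling of the orientation claim is the right one --- since $-\vec w$ is a bisector whenever $\vec w$ is, the corollary's orientation conclusion can only hold for the positively-scaled representative, and saying so explicitly is an improvement on the statement as written. One correction to the closing remark you propose to record: in the mixed-orientation timelike case the degeneracy at $s=-t$ gives $\vec\beta=-\vec\alpha$, so $\vec\alpha+\vec\beta$ is the \emph{zero} vector, not a lightlike one; and for $s\neq -t$ the quantity $-2+2\cosh(s+t)$ is strictly positive, so the sum of oppositely-oriented timelike unit vectors is \emph{spacelike}. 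The equal-orientation hypothesis is therefore still essential, but the failure mode is a flip of causal character (or outright vanishing), not degeneration to a lightlike bisector; the remark should be reworded accordingly before it goes into the paper.
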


It follows easily that every angle bisector is a scalar multiple of the vector sum of the unit vectors as above. Hence, the angle bisector must be either timelike or spacelike since the sum of two vectors of same causal character cannot yield a lightlike vector. This fact is important in the sense that we are guaranteed to have an isometry involving reflection with respect to an angle bisector.

The above definition comes directly from the inner product representation in the Euclidean case; however, it must agree with the natural definition of angle bisectors for vectors where angle may be defined. The next proposition will show that the above definition is equivalent to the intuitive definition for such vectors.

\begin{proposition}
  Let $\vec a$ and $\vec b$ be non-zero vectors with same causal character and same orientation, with unit vectors $\vec\alpha$ and $\vec\beta$, respectively. Then $\vec w$ is an angle bisector of $\vec a$ and $\vec b$ if and only if there is some rotation matrix $A[\varphi]$ such that $\vec w = A[\varphi]\vec\alpha$ and $\vec\beta = A[\varphi]\vec w$.
\end{proposition}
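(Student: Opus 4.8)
The plan is to reduce both sides of the equivalence to statements about the one-parameter rotation group $\{A[\varphi]\}$ and then to exploit the half-angle factorisation of $A[\varphi]$. Before starting I would record three elementary facts. First, each $A[\varphi]$ is a Lorentzian isometry, i.e. $A[\varphi]\vec x \circ A[\varphi]\vec y = \vec x \circ \vec y$; this is a one-line matrix computation (equivalently $A[\varphi]^{\top}\,\mathrm{diag}(1,-1)\,A[\varphi] = \mathrm{diag}(1,-1)$), and in particular $A[\varphi]$ sends unit vectors to unit vectors and preserves orientation. Second, the composition law $A[\varphi]A[\chi] = A[\varphi+\chi]$, which follows from the hyperbolic angle-addition formulas and gives both $A[\varphi]^{-1} = A[-\varphi]$ and $A[\psi/2]^{2} = A[\psi]$. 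Third, since $\vec\alpha$ and $\vec\beta$ are unit vectors of the same causal character and the same orientation, there is a $\psi$ with $\vec\beta = A[\psi]\vec\alpha$ — this is exactly the configuration in which the usual angle between $\vec a$ and $\vec b$ is defined, and it is what the ``same orientation'' hypothesis is there to guarantee. Finally, I would recall from Proposition 2 together with the remark that any two angle bisectors of a fixed pair are scalar multiples of each other that every angle bisector of $\vec a$ and $\vec b$ is a nonzero scalar multiple of $\vec\alpha + \vec\beta$, and from the Corollary that it has the same causal character and orientation as $\vec\alpha$.

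For the ``if'' direction, suppose $\vec w = A[\varphi]\vec\alpha$ and $\vec\beta = A[\varphi]\vec w$. Then $\vec w$ is a unit vector, and applying the isometry property to $\vec\beta \circ \vec w = A[\varphi]\vec w \circ A[\varphi]\vec\alpha$ yields $\vec\beta \circ \vec w = \vec w \circ \vec\alpha = \vec\alpha \circ \vec w$, which is precisely the angle-bisector condition in the form $\vec\alpha \circ \vec w = \vec\beta \circ \vec w$ noted after the Definition. So this direction is immediate once the isometry property is in hand.

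For the ``only if'' direction, the key computation is the factorisation $I + A[\psi] = A[\psi/2]\bigl(A[-\psi/2] + A[\psi/2]\bigr) = 2\cosh(\psi/2)\,A[\psi/2]$, where the last equality uses that $\cosh$ is even and $\sinh$ is odd. Given an angle bisector $\vec w$, I would normalise it to be a unit vector (angle bisectors being determined up to a nonzero scalar); by the Corollary it then has the same orientation as $\vec\alpha$. Writing $\vec w = t(\vec\alpha + \vec\beta) = t(I + A[\psi])\vec\alpha = 2t\cosh(\psi/2)\,A[\psi/2]\vec\alpha$ and using that $A[\psi/2]\vec\alpha$ is a unit vector with the same orientation as $\vec\alpha$, the scalar $2t\cosh(\psi/2)$ is forced to equal $1$, so $\vec w = A[\psi/2]\vec\alpha$. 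Setting $\varphi = \psi/2$ then gives $\vec w = A[\varphi]\vec\alpha$ and $A[\varphi]\vec w = A[\psi/2]^{2}\vec\alpha = A[\psi]\vec\alpha = \vec\beta$, as required.

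The routine parts — the isometry and composition identities and the factorisation of $I + A[\psi]$ — are short. The step that needs the most care is the bookkeeping of scalars and orientations in the ``only if'' direction: one must invoke the Corollary to know that the unit representative of $\vec w$ points the same way as $\vec\alpha$, so that the proportionality constant comes out exactly $+1$ and not $-1$, and one must use the ``same orientation'' hypothesis to know that the rotation $A[\psi]$ carrying $\vec\alpha$ to $\vec\beta$ exists at all. I expect this sign/orientation discussion, rather than any computation, to be the main obstacle.
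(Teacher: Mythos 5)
Your proof is correct, but it takes a genuinely different route from the paper's. For the direction where the rotation is assumed to exist, the paper expands $(A[-\varphi]\vec w - A[\varphi]\vec w)\circ\vec w$ in coordinates and checks that it vanishes; you instead invoke the fact that $A[\varphi]$ preserves the Lorentzian inner product, which gives the same conclusion in one line. For the converse, the paper introduces two unknown angles $\varphi_1, \varphi_2$ with $\vec w = A[\varphi_1]\vec\alpha$ and $\vec\beta = A[\varphi_2]\vec w$, expands the resulting orthogonality condition componentwise to deduce $\cosh\varphi_1 = \cosh\varphi_2$, and then must argue separately (using $\vec\alpha \neq \vec\beta$) that the spurious case $\varphi_1 = -\varphi_2$ does not occur. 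You instead start from the explicit description of the bisector as a multiple of $\vec\alpha + \vec\beta$ (Proposition 2 together with the uniqueness remark and the Corollary) and use the group law to factor $I + A[\psi] = 2\cosh(\psi/2)\,A[\psi/2]$, which produces the half-angle rotation directly and sidesteps the sign ambiguity; the orientation bookkeeping that forces the scalar $2t\cosh(\psi/2)$ to equal $+1$ is exactly the point the paper handles via its distinctness argument, and you handle it correctly. What your approach buys is a structural explanation (the bisector is literally the image of $\vec\alpha$ under the half rotation) at the cost of relying on the transitivity of the rotation group on each branch of the unit hyperbola and on the prior classification of bisectors; the paper's version is more self-contained but purely computational at the level of matrix entries.
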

\begin{proof}
  Without loss of generality, we may assume that $\vec w$ is a unit vector since all angle bisectors of $\vec a$ and $\vec b$ are scalar multiples of each other. Furthermore, the proposition is trivial if $\vec\alpha = \vec\beta$. Therefore, we may assume that the two vectors are not scalar multiples of each other.

  First assume that $\vec w$ is an angle bisector of $\vec a$ and $\vec b$. Since the two vectors have the same causal character and orientation, $\vec w$ also have the same orientation, guaranteeing the existence of rotation matrices $A[\varphi_1]$ and $A[\varphi_2]$ such that
  \begin{equation} \label{eq:rot}
    \vec w = A[\varphi_1]\vec\alpha\text{ and }\vec\beta = A[\varphi_2]\vec w.
  \end{equation}
  Since $\vec\alpha \circ \vec w = \vec\beta \circ \vec w$, \eqref{eq:rot} implies that
  $$(A[-\varphi_1]\vec w - A[\varphi_2]\vec w)\circ\vec w = 0.$$
  Therefore, for some real $s$,
  $$A[-\varphi_1]\vec w - A[\varphi_2]\vec w = s(w_2,w_1)$$
  where $\vec w = (w_1,w_2)$.
  By applying \eqref{eq:rotation} and \eqref{eq:rotation2} and simplifying, we find that
  \begin{align*}
    (\cosh\varphi_1-\cosh\varphi_2)w_1 - (\sinh\varphi_1 + \sinh\varphi_2)w_2 &= sw_2\\
    -(\sinh\varphi_1 + \sinh\varphi_2)w_1 + (\cosh\varphi_1- \cosh\varphi_2)w_2 &= sw_1,
  \end{align*}
  implying that $\cosh\varphi_1 = \cosh\varphi_2$. Since $\vec\alpha$ and $\vec\beta$ are distinct, \eqref{eq:rot} implies that $\varphi_1 = \varphi_2$.

  On the other hand, assume that there exists a rotation matrix $A[\varphi]$ such that
  $$\vec w = A[\varphi]\vec\alpha\text{ and }\vec\beta = A[\varphi]\vec w.$$
  Then,
  \begin{equation} \label{eq:checkref}
    \vec\alpha \circ \vec w - \vec\beta \circ \vec w = (A[-\varphi]\vec w - A[\varphi]\vec w)\circ \vec w.
  \end{equation}
  By applying (6) and (7) and simplifying,
  $$A[-\varphi]\vec w - A[\varphi]\vec w = -2\sinh\varphi(w_2,w_1)$$
  where $\vec w = (w_1,w_2)$. Hence, the expression \eqref{eq:checkref} equals zero.
\end{proof}

\section{Preliminaries from Lorentzian plane geometry}
Before we proceed to prove more interesting results, we must first state few results from Lorentzian plane geometry. Note that from this point on, given a triangle $ABC$, $\vec a$ signifies the vector opposite of vertex $A$, and the direction is given so that $\vec a + \vec b + \vec c = 0$. In Euclidean geometry, given two vectors $\vec a$ and $\vec b$ and an angle $\theta$ between the two vectors, (2) can be used so that
$$\sin^2\theta = 1-\frac{\vec a \bullet \vec b}{(\vec a \bullet \vec a)(\vec b \bullet \vec b)}.$$
Therefore, we may define a Lorentzian version of squared trigonometric function, as follows:
$$S^2_{ab} = 1- \frac{\vec a \circ \vec b}{(\vec a \circ \vec a)(\vec b \circ \vec b)}.$$
Now we may state the \emph{law of sines} in Lorentzian case using the above notation.
\begin{theorem}[Lorentzian law of sines]
  Let $ABC$ be a triangle without null edges. Then,
  \begin{equation} \label{eq:sines}
    \frac{S^2_{bc}}{|\vec a|^2} = \frac{S^2_{ca}}{|\vec b|^2} = \frac{S^2_{ab}}{|\vec c|^2}.
  \end{equation}
\end{theorem}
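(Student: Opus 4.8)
The plan is to collapse each ratio in \eqref{eq:sines} to one and the same expression, using a Lorentzian analogue of Lagrange's identity together with the linear relation $\vec a + \vec b + \vec c = 0$. First I would rewrite $S^2_{ab}$ over the common denominator $(\vec a \circ \vec a)(\vec b \circ \vec b)$, so that its numerator becomes $(\vec a \circ \vec a)(\vec b \circ \vec b) - (\vec a \circ \vec b)^2$. Expanding in coordinates $\vec a = (a_1,a_2)$, $\vec b = (b_1,b_2)$ shows this numerator equals $-(a_1 b_2 - a_2 b_1)^2$; writing $[\vec a,\vec b] := a_1 b_2 - a_2 b_1$ for the ordinary determinant, this is the identity $(\vec a \circ \vec a)(\vec b \circ \vec b) - (\vec a \circ \vec b)^2 = -[\vec a,\vec b]^2$, the Lorentzian counterpart of $|\vec a|^2|\vec b|^2 - (\vec a \bullet \vec b)^2 = [\vec a,\vec b]^2$ implicit in the Euclidean formula quoted just before the theorem. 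Hence $S^2_{ab} = -[\vec a,\vec b]^2 / \big((\vec a \circ \vec a)(\vec b \circ \vec b)\big)$, and the same computation gives the analogous formulas for $S^2_{bc}$ and $S^2_{ca}$.

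Next I would substitute $\vec c = -(\vec a + \vec b)$ and use bilinearity and antisymmetry of $[\cdot,\cdot]$, together with $[\vec v,\vec v] = 0$, to compute $[\vec b,\vec c] = [\vec b,-\vec a-\vec b] = -[\vec b,\vec a] = [\vec a,\vec b]$ and similarly $[\vec c,\vec a] = [\vec a,\vec b]$. Thus all three determinants coincide; call the common value $D$. Since $|\vec a|^2 = \vec a \circ \vec a$ (and likewise for $\vec b$, $\vec c$), dividing each $S^2$ by the appropriate squared length yields
$$\frac{S^2_{bc}}{|\vec a|^2} = \frac{-D^2}{(\vec a \circ \vec a)(\vec b \circ \vec b)(\vec c \circ \vec c)},$$
and the identical symmetric expression for $S^2_{ca}/|\vec b|^2$ and $S^2_{ab}/|\vec c|^2$, which is exactly \eqref{eq:sines}. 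The hypothesis that $ABC$ has no null edges guarantees $\vec a \circ \vec a$, $\vec b \circ \vec b$, $\vec c \circ \vec c$ are all non-zero, so every quantity above is well-defined.

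I do not expect a serious obstacle here. The one step deserving care is the coordinate verification of the Lorentzian Lagrange identity $(\vec a \circ \vec a)(\vec b \circ \vec b) - (\vec a \circ \vec b)^2 = -[\vec a,\vec b]^2$, and in particular its sign: unlike the Euclidean case the right-hand side is negative, which is the same sign phenomenon responsible for the causal-character subtleties elsewhere in the paper. Everything after that is bookkeeping with the relation $\vec a + \vec b + \vec c = 0$.
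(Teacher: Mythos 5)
Your proof is correct. Note, however, that the paper does not actually prove this theorem: it is stated as a preliminary (implicitly imported from Birman--Nomizu), so there is no in-paper argument to compare against, and your derivation genuinely fills that gap. The key step, the Lorentzian Lagrange identity $(\vec a \circ \vec a)(\vec b \circ \vec b) - (\vec a \circ \vec b)^2 = -(a_1b_2-a_2b_1)^2$, checks out including the sign, and the bookkeeping with $\vec a + \vec b + \vec c = 0$ is right; it is also worth observing that your common value $D$ is twice the signed Euclidean area of the triangle, so your argument is precisely the Lorentzian transcription of the classical area proof of the law of sines (each ratio equals $-4\,\mathrm{Area}^2$ divided by $(\vec a\circ\vec a)(\vec b\circ\vec b)(\vec c\circ\vec c)$), which matches the paper's remark that Lorentzian triangle area behaves as one-half base times height. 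One housekeeping point: you have tacitly corrected the paper's definition of $S^2_{ab}$, which as printed reads $1 - \vec a \circ \vec b\,/\bigl((\vec a\circ\vec a)(\vec b\circ\vec b)\bigr)$ with the exponent $2$ missing from $\vec a \circ \vec b$; your version with $(\vec a\circ\vec b)^2$ is clearly what the author intends (it is the only reading consistent with the Euclidean $\sin^2\theta$ formula it imitates and with the squaring step in the proof of the bisector theorem), but you should state explicitly that you are working with the corrected definition.
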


Also, we need to state the Lorentzian version of Ceva's theorem and its converse, to be used in the proofs of existence of incenter and the isogonal conjugate.
\begin{theorem}[Lorentzian Ceva's Theorem]
  Let $ABC$ be a triangle without null edges. Let $P$ be any point, and let $D$, $E$, and $F$, be the intersection between line $AP$ and $BC$, $BP$ and $CA$, and $CP$ and $AB$, respectively. Then,
  \begin{equation} \label{eq:ceva1}
    \frac{|\overrightarrow{AF}|}{|\overrightarrow{FB}|}
    \cdot\frac{|\overrightarrow{BD}|}{|\overrightarrow{DC}|}
    \cdot\frac{|\overrightarrow{CE}|}{|\overrightarrow{EA}|} = 1.
  \end{equation}
\end{theorem}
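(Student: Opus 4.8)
The plan is to reduce \eqref{eq:ceva1} to a purely affine identity, using the observation that a ratio of Lorentzian lengths of \emph{collinear} vectors does not see the metric at all. Indeed, if a line $\ell$ is non-null and $X$, $Y$, $Z$ lie on $\ell$ with $Y = X + t(Z - X)$, then $\overrightarrow{XY} = t(Z - X)$ and $\overrightarrow{YZ} = (1-t)(Z-X)$, so
$$\frac{|\overrightarrow{XY}|}{|\overrightarrow{YZ}|} = \frac{\sqrt{t^2\,(Z-X)\circ(Z-X)}}{\sqrt{(1-t)^2\,(Z-X)\circ(Z-X)}} = \frac{|t|}{|1-t|},$$
the common factor $(Z-X)\circ(Z-X)\neq 0$ cancelling whatever its sign; in particular the quotient is a genuine real number even when $\ell$ is timelike and the two lengths are imaginary. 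Since $ABC$ has no null edge, this applies along each of $AB$, $BC$, $CA$, so every ratio appearing in \eqref{eq:ceva1} is the absolute value of an affine ratio.

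Next I would put barycentric coordinates on the (affinely independent) triangle, writing $P = \alpha A + \beta B + \gamma C$ with $\alpha + \beta + \gamma = 1$. For the three cevian feet to exist and be distinct from the vertices one needs $P$ to lie off all three side-lines, i.e. $\alpha,\beta,\gamma \notin \{0,1\}$, and I would record this as the standing hypothesis hidden in the phrase ``any point $P$'' (note a null cevian line causes no trouble, since a null line and the non-null line it is tested against are never parallel, so they always meet in a point). Intersecting line $AP$ with line $BC$ gives, after a one-line computation, $D = \tfrac{\beta}{\beta+\gamma}\,B + \tfrac{\gamma}{\beta+\gamma}\,C$, hence $|\overrightarrow{BD}|/|\overrightarrow{DC}| = |\gamma/\beta|$; the two cyclic variants give $|\overrightarrow{CE}|/|\overrightarrow{EA}| = |\alpha/\gamma|$ and $|\overrightarrow{AF}|/|\overrightarrow{FB}| = |\beta/\alpha|$. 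Multiplying the three, the factors cancel in pairs and the product is $1$, which is \eqref{eq:ceva1}. Equivalently, once the ratios are identified with absolute affine ratios, one may simply quote the classical Euclidean Ceva theorem, since concurrency of $AD$, $BE$, $CF$ at $P$ is an incidence statement unaffected by the choice of metric.

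The computations here are entirely routine. The only genuinely Lorentzian input is the cancellation in the displayed identity above, and the only point demanding care is the bookkeeping of admissible positions of $P$ --- making precise that ``any point'' must mean a point off the three side-lines, so that $D$, $E$, $F$ are defined and the ratios are finite and nonzero. I expect that degenerate-case accounting, rather than any real difficulty, to be the main obstacle.
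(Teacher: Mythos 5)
Your proof is correct, but it takes a genuinely different route from the paper. The paper disposes of this theorem in one sentence by transplanting the classical \emph{area} proof: it asserts that the area of a Lorentzian triangle without null edges is still one-half base times height, and then runs the Euclidean argument comparing areas of triangles sharing a cevian. You instead observe that \eqref{eq:ceva1} is not really a metric statement at all: for collinear points on a non-null line the quotient of Lorentzian lengths collapses to the absolute value of an affine ratio (your cancellation of $(Z-X)\circ(Z-X)$, valid whatever its sign), after which the theorem is the affine Ceva identity, which you verify directly with barycentric coordinates. Your version buys robustness and transparency --- it makes explicit that the only Lorentzian input is the non-nullity of the three sides, it needs no development of Lorentzian area, and it handles $P$ outside the triangle with no case analysis since the absolute values absorb all signs; it also surfaces the genuine degeneracies hidden in ``any point $P$'' (namely $P$ on a side-line, or a cevian parallel to the opposite side --- note that your gloss ``$\alpha,\beta,\gamma\notin\{0,1\}$'' conflates these two conditions: $\alpha=0$ puts $P$ on $BC$, while $\alpha=1$ makes $AP$ parallel to $BC$; both must be excluded but they are different failures). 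The paper's approach buys brevity and a uniform treatment of Theorems 2 and 3 together, at the cost of leaning on an unproved claim about Lorentzian area. Either argument is acceptable; yours is the more self-contained of the two.
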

\begin{theorem}[Converse to Ceva's Theorem]
  Let $ABC$ be a triangle without null edges. If $D$, $E$, and $F$ are points on line $BC$, $CA$, and $AB$, respectively, such that \eqref{eq:ceva1} holds, then lines $AD$, $BE$, and $CF$ are concurrent.
\end{theorem}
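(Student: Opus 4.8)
The plan is to deduce the converse from the Lorentzian Ceva's Theorem (Theorem 2) by the classical \emph{uniqueness-of-the-foot} argument. I would begin by setting $P$ to be the point of intersection of the lines $BE$ and $CF$. Because the Lorentzian plane has the same underlying affine structure as the Euclidean plane, two distinct lines either meet in exactly one point or are parallel, so this step requires ruling out $BE \parallel CF$; granting the intersection, $P$ cannot lie on any of the three side lines (if, say, $P \in BC$, then lines $BE$ and $BC$ would share the two distinct points $B$ and $P$, forcing $E \in BC$, contrary to $E$ being a point of line $CA$). Next, let $D'$ be the intersection of line $AP$ with line $BC$ — again one checks $AP \not\parallel BC$.

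Now $D'$, $E$, $F$ are exactly the cevian feet of the point $P$, so Theorem 2 applies to $P$ and yields
$$\frac{|\overrightarrow{AF}|}{|\overrightarrow{FB}|}\cdot\frac{|\overrightarrow{BD'}|}{|\overrightarrow{D'C}|}\cdot\frac{|\overrightarrow{CE}|}{|\overrightarrow{EA}|} = 1.$$
Dividing this by the hypothesis \eqref{eq:ceva1}, the $F$- and $E$-factors cancel and I get $|\overrightarrow{BD'}|/|\overrightarrow{D'C}| = |\overrightarrow{BD}|/|\overrightarrow{DC}|$. Parametrizing line $BC$ by $X(t) = B + t\,\overrightarrow{BC}$, a short computation (using that $\overrightarrow{BX(t)} = t\,\overrightarrow{BC}$ and $\overrightarrow{X(t)C} = (1-t)\,\overrightarrow{BC}$ are scalar multiples of a common vector, hence of the same causal character) shows the ratio equals $|t|/|1-t|$; the equality of ratios then forces the parameter of $D'$ to equal that of $D$, i.e. $D = D'$. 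Consequently line $AD = AD'$ passes through $P$, so $AD$, $BE$, and $CF$ all contain $P$ and are concurrent.

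The main obstacle is twofold. First, the existence of the auxiliary intersection points: in the affine plane one must either add a non-parallelism hypothesis on the cevians or dispose of the degenerate parallel configurations directly. Second, and more delicate, since \eqref{eq:ceva1} is written with \emph{unsigned} Lorentzian lengths, the equation $|t|/|1-t| = |t'|/|1-t'|$ has in general two solutions — one with the foot lying between $B$ and $C$ and one with it lying outside — so the deduction $D = D'$ needs an orientation condition (the analogue of the Euclidean requirement that an even number of the feet $D$, $E$, $F$ lie on the extensions of the sides). I would either make this hypothesis explicit in the statement or verify that it holds in the configurations where this converse is invoked (the incenter and the isogonal conjugate). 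Apart from these points, the argument is a routine transcription of the Euclidean proof.
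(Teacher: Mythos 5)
Your proposal is correct and is essentially the approach the paper itself relies on: the paper gives no written proof of the converse, remarking only that the argument "is exactly the same as the Euclidean counterpart," and the Euclidean converse is proved precisely by your uniqueness-of-the-foot argument (let $P = BE \cap CF$, let $D'$ be the foot of $AP$ on $BC$, apply the forward theorem, cancel against \eqref{eq:ceva1}, conclude $D = D'$). The two caveats you raise are genuine, but they are defects of the paper's unsigned formulation of \eqref{eq:ceva1} rather than of your argument: since the ratios are taken with absolute values, $|t|/|1-t| = |t'|/|1-t'|$ indeed has two solutions on the line $BC$, so the step $D = D'$ requires an orientation or parity condition (an even number of feet on the side extensions), and the existence of the auxiliary intersections must be assumed or checked; the paper silently relies on both holding in the configurations where the converse is invoked (the incenter and the isogonal conjugate).
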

\noindent The proofs to the above theorems is exactly the same as the Euclidean counterpart using the area of a triangle, since the area of a Lorentzian triangle without null edges is also equal to one-half of base times height. Note that the segments in the above fraction have the same causal character pairwise. Therefore, \eqref{eq:ceva1} is equivalent to
\begin{equation} \label{eq:ceva2}
  \frac{\overrightarrow{AF}\circ\overrightarrow{AF}}{\overrightarrow{FB}\circ\overrightarrow{FB}}
  \cdot\frac{\overrightarrow{BD}\circ\overrightarrow{BD}}{\overrightarrow{DC}\circ\overrightarrow{DC}}
  \cdot\frac{\overrightarrow{CE}\circ\overrightarrow{CE}}{\overrightarrow{EA}\circ\overrightarrow{EA}}
  = \frac{|\overrightarrow{AF}|^2}{|\overrightarrow{FB}|^2}
  \cdot\frac{|\overrightarrow{BD}|^2}{|\overrightarrow{DC}|^2}
  \cdot\frac{|\overrightarrow{CE}|^2}{|\overrightarrow{EA}|^2} = 1.
\end{equation}
Since we primarily express attributes of Lorentzian triangle in terms of inner product, we will make use of \eqref{eq:ceva2} more frequently than \eqref{eq:ceva1}. The notation for Ceva's theorem are illustrated in Figure \ref{fig:ceva}.
\begin{figure}[ht]
    \centering
    \includegraphics[width=0.9\textwidth]{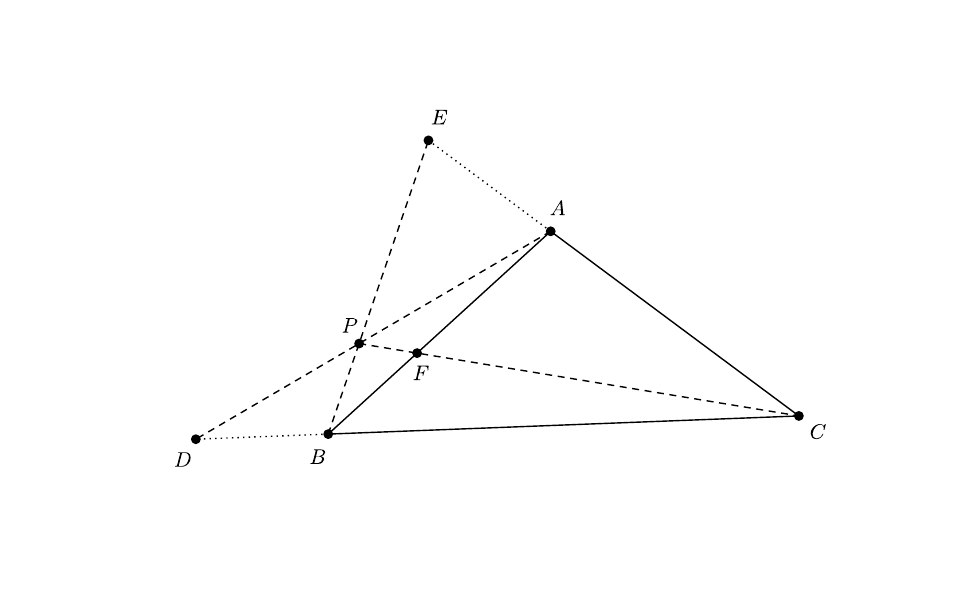}
    \caption{Illustration of Ceva's theorem where $P$ is outside the triangle\label{fig:ceva}.}
\end{figure}

\section{Incenter of a Lorentzian triangle}
In a Euclidean triangle, the incenter is located by finding the intersection of the three angle bisectors. The existence of angle bisectors was guaranteed for any triangle in Euclidean plane; therefore, the incenter always exist. However, in Lorentzian plane, the existence of angle bisector is conditional, namely, the two vectors associated must have the same causal character. Therefore, angle bisectors exist at every vertex of a triangle if and only if the triangle is a \emph{pure triangle}, a triangle whose sides share a single causal character.

In fact, one should notice that Lorentzian circles are either horizontal or vertical Euclidean hyperbolas. Therefore, all tangent lines to a given Lorentzian circle are either timelike or spacelike. It follows that for a triangle to have a Euclidean hyperbola that is tangent to every side, then the triangle must be a pure triangle. Thus, the above restriction from our definition of angle bisectors is justified, and we will assume from this point on that triangle $ABC$ is a pure triangle unless otherwise stated.

To prove that the angle bisectors are concurrent, we follow the proof in Euclidean case. The Euclidean case uses bisector theorem and applies Ceva's theorem to show that the concurrency of the angle bisectors. Hence, we must state and prove the bisector theorem for Lorentzian case. The notations of the theorem are illustrated in Figure \ref{fig:bisector}.

\begin{figure}[ht]
    \centering
    \includegraphics[width=0.8\textwidth]{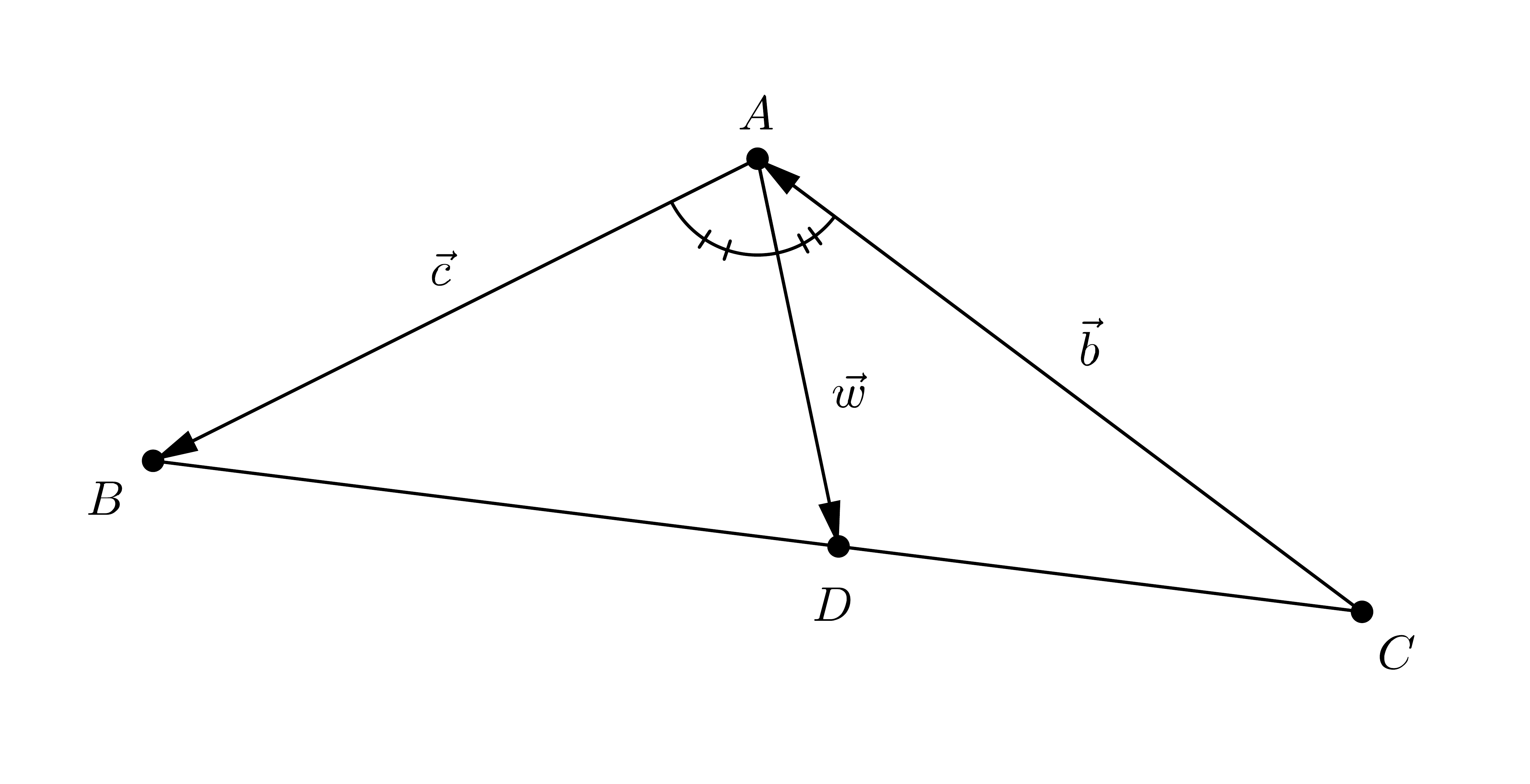}
    \caption{Lorentzian angle bisector of a triangle\label{fig:bisector}.}
\end{figure}

\begin{theorem}[Lorentzian bisector theorem]
  Given a triangle $ABC$, let $D$ be on $BC$ such that $\overrightarrow{AD} = \vec w$ is an angle bisector of $\vec b$ and $\vec c$. Then,
  \begin{equation} \label{eq:bisector}
    \frac{|\vec b|^2}{|\vec c|^2} = \frac{|\overrightarrow{DC}|^2}{|\overrightarrow{BD}|^2}.
  \end{equation}
\end{theorem}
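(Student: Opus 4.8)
The plan is to place $D$ on the line $BC$ by a single scalar parameter and then feed the resulting cevian vector into the defining relation \eqref{eq:lab} of an angle bisector; this turns the whole theorem into one scalar equation.

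Concretely, I would write $\overrightarrow{BD}=t\,\vec a$ and $\overrightarrow{DC}=(1-t)\,\vec a$, where $\vec a=\overrightarrow{BC}$ and $t\in\mathbb{R}$. Because $ABC$ has no null edges we have $\vec a\circ\vec a\neq 0$, so $|\overrightarrow{DC}|^2/|\overrightarrow{BD}|^2=(1-t)^2/t^2$, and \eqref{eq:bisector} is equivalent to the identity $(1-t)^2/t^2=|\vec b|^2/|\vec c|^2$. On the other side, $\vec w=\overrightarrow{AD}=\overrightarrow{AB}+\overrightarrow{BD}=\vec c+t\,\vec a=(1-t)\,\vec c-t\,\vec b$, where I used $\vec a+\vec b+\vec c=0$.

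Next I would substitute this $\vec w$ into \eqref{eq:lab}. Writing everything in terms of the three numbers $\vec b\circ\vec b$, $\vec c\circ\vec c$, $\vec b\circ\vec c$, the relation becomes linear in $t$. Since $ABC$ is pure, $\vec b\circ\vec b$ and $\vec c\circ\vec c$ have the same sign, so $\|\vec b\|^2=|\vec b\circ\vec b|$, $\|\vec c\|^2=|\vec c\circ\vec c|$ and $|\vec b|^2/|\vec c|^2=\|\vec b\|^2/\|\vec c\|^2$; replacing $\vec b\circ\vec b$ and $\vec c\circ\vec c$ accordingly, the linear equation collapses (in the spacelike case) to
\[
\bigl(\vec b\circ\vec c-\|\vec b\|\,\|\vec c\|\bigr)\,\bigl[(1-t)\,\|\vec c\|+t\,\|\vec b\|\bigr]=0,
\]
with $\|\vec b\|\,\|\vec c\|$ carrying the opposite sign in the timelike case. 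The scalar factor $\vec b\circ\vec c\mp\|\vec b\|\,\|\vec c\|$ is nonzero: its vanishing would give $(\vec b\circ\vec c)^2=(\vec b\circ\vec b)(\vec c\circ\vec c)$, i.e.\ a singular Gram matrix for the independent pair $\{\vec b,\vec c\}$, which is impossible in the nondegenerate Lorentzian plane. Cancelling it yields $(1-t)\,\|\vec c\|=-t\,\|\vec b\|$, hence $(1-t)^2/t^2=\|\vec b\|^2/\|\vec c\|^2=|\vec b|^2/|\vec c|^2$, which is \eqref{eq:bisector}.

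The main obstacle I anticipate is causal-character bookkeeping rather than anything conceptual: the computation must be carried out with the magnitudes $\|\cdot\|$ (and absolute values of inner products) so that the spacelike and timelike pure triangles are handled by a single argument, and one must verify the nonvanishing of the factor being cancelled, which is exactly where nondegeneracy of the triangle enters. A minor point is that a pure triangle may, at a vertex, possess an angle bisector on either of two lines (the analogues of the internal and external bisectors); this only changes $(1-t)\,\|\vec c\|=-t\,\|\vec b\|$ to $(1-t)\,\|\vec c\|=t\,\|\vec b\|$, and since \eqref{eq:bisector} records only a ratio of squares, both possibilities are covered simultaneously.
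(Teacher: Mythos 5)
Your proposal is correct, but it takes a genuinely different route from the paper. The paper mimics the classical Euclidean proof of the bisector theorem: from the bisector condition it derives $S^2_{bw}=S^2_{cw}$, notes $S^2_{DCb}=S^2_{ab}$ and $S^2_{BDc}=S^2_{ac}$ because $\overrightarrow{BD},\overrightarrow{DC}$ are multiples of $\vec a$, and then applies the Lorentzian law of sines three times (to $ABD$, $ACD$, and $ABC$) to convert the ratio of the $S^2$ quantities into the ratio of squared lengths. Your argument bypasses the law of sines entirely: parametrizing $D$ by $\overrightarrow{BD}=t\vec a$, writing $\vec w=(1-t)\vec c-t\vec b$, and substituting into \eqref{eq:lab} reduces everything to the factorization $\bigl(\vec b\circ\vec c\mp\|\vec b\|\,\|\vec c\|\bigr)\bigl[(1-t)\|\vec c\|+t\|\vec b\|\bigr]=0$, which I have checked in both the spacelike and timelike cases. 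Your justification that the first factor is nonzero is sound and can even be sharpened: in the Lorentzian plane the Gram determinant of a basis is $-(\det[\vec b\ \vec c])^2<0$, so $(\vec b\circ\vec c)^2>(\vec b\circ\vec b)(\vec c\circ\vec c)$ strictly. What your approach buys is self-containedness (no reliance on Theorem 1 or on the auxiliary identities \eqref{eq:angle2}) and extra information the paper's proof does not surface, namely the explicit location $t=\|\vec c\|/(\|\vec c\|-\|\vec b\|)$ of $D$ on line $BC$ (which in particular shows that with the paper's sign convention $\vec a+\vec b+\vec c=0$ the point $D$ lies outside the segment, and that no such $D$ exists when $\|\vec b\|=\|\vec c\|$ — consistent with the theorem's hypothesis that such a $D$ is given). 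What the paper's route buys is uniformity: the same law-of-sines template is reused verbatim in the isogonal-conjugate lemma, whereas your computation would have to be redone there.
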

\begin{proof}
  Since $\vec w$ is an angle bisector of $\vec b$ and $\vec c$,
  $$\frac{\vec b \circ \vec w}{\sqrt{|\vec b\circ\vec b|}} = \frac{\vec c \circ \vec w}{\sqrt{|\vec c\circ\vec c|}}$$
  and squaring both sides tells us that
  $$\frac{(\vec b \circ \vec w)^2}{(\vec b\circ\vec b)(\vec w \circ \vec w)} = \frac{(\vec c \circ \vec w)^2}{(\vec c\circ\vec c)(\vec w\circ\vec w)}$$
  where the absolute value is taken off since $\vec b$ and $\vec c$ have the same causal character.
  Hence,
  \begin{equation}\label{eq:angle1}
    S^2_{bw} = S^2_{cw}.
  \end{equation}
  On the other hand, since $\overrightarrow{BD}$ and $\overrightarrow{DC}$ are scalar multiples of $\vec a$, it can be easily checked that
  \begin{equation}\label{eq:angle2}
    S^2_{DCb} = S^2_{ab}\text{ and }S^2_{BDc} = S^2_{ac}.
  \end{equation}
  Now, applications of law of sines to triangles $ABD$ and $ACD$ implies
  $$\frac{|\overrightarrow{DC}|^2}{|\vec w|^2} = \frac{S^2_{bw}}{S^2_{DCb}}\text{ and }
  \frac{|\vec w|^2}{|\overrightarrow{BD}|^2} = \frac{S^2_{BDc}}{S^2_{cw}}.$$
  Therefore, by applying \eqref{eq:angle1} and \eqref{eq:angle2},
  $$\frac{|\overrightarrow{DC}|^2}{|\overrightarrow{BD}|^2} = \frac{S^2_{bw}}{S^2_{DCb}}\cdot\frac{S^2_{BDc}}{S^2_{cw}} = \frac{S^2_{ac}}{S^2_{ab}} = \frac{|\vec b|^2}{|\vec c|^2}$$
  where the last inequality comes from applying law of sines to the triangle $ABC$.
\end{proof}

Now the concurrency of the angle bisectors follows from a direct application of bisector theorem and Ceva's theorem.

\begin{theorem}[Incenter of a pure triangle]
  Given a triangle $ABC$, let $D$, $E$, and $F$ be on $BC$, $CA$, and $AC$ such that $\overrightarrow{AD}$, $\overrightarrow{BE}$, and $\overrightarrow{CF}$ are angle bisectors, respectively. Then $AD$, $BE$, and $CF$ are concurrent.
\end{theorem}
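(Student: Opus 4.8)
The plan is to reproduce the classical Euclidean argument essentially verbatim: apply the Lorentzian bisector theorem at each of the three vertices, multiply the three squared-length identities, recognize the product as the Cevian condition \eqref{eq:ceva2}, and invoke the converse to Ceva's theorem to conclude concurrency.

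First I would apply the Lorentzian bisector theorem at vertex $A$. Since $ABC$ is a pure triangle, the two sides at $A$ share a causal character, so a bisector through $A$ exists and meets line $BC$ at the point $D$; equation \eqref{eq:bisector} reads
$$\frac{|\vec b|^2}{|\vec c|^2} = \frac{|\overrightarrow{DC}|^2}{|\overrightarrow{BD}|^2}.$$
Relabeling the vertices cyclically $A\to B\to C\to A$ --- which carries $\vec a\to\vec b\to\vec c\to\vec a$ and $D\to E\to F$ --- and invoking the same theorem at $B$ and at $C$ gives
$$\frac{|\vec c|^2}{|\vec a|^2} = \frac{|\overrightarrow{EA}|^2}{|\overrightarrow{CE}|^2}\qquad\text{and}\qquad\frac{|\vec a|^2}{|\vec b|^2} = \frac{|\overrightarrow{FB}|^2}{|\overrightarrow{AF}|^2}.$$

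Next I would multiply the three identities. The left-hand side telescopes to $1$, hence
$$\frac{|\overrightarrow{DC}|^2}{|\overrightarrow{BD}|^2}\cdot\frac{|\overrightarrow{EA}|^2}{|\overrightarrow{CE}|^2}\cdot\frac{|\overrightarrow{FB}|^2}{|\overrightarrow{AF}|^2} = 1,$$
and taking reciprocals and reordering the factors yields exactly
$$\frac{|\overrightarrow{AF}|^2}{|\overrightarrow{FB}|^2}\cdot\frac{|\overrightarrow{BD}|^2}{|\overrightarrow{DC}|^2}\cdot\frac{|\overrightarrow{CE}|^2}{|\overrightarrow{EA}|^2} = 1,$$
which is \eqref{eq:ceva2}. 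Since the cevian segments pairwise share a causal character, \eqref{eq:ceva2} is equivalent to the ratio form \eqref{eq:ceva1}, so the converse to Ceva's theorem applies to $D$, $E$, $F$ and the lines $AD$, $BE$, $CF$ are concurrent.

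I expect no genuine obstacle --- all the content is already carried by the bisector theorem and the Ceva package, and what remains is bookkeeping. The points that deserve a moment's care are: tracking the cyclic relabeling so that the three bisector ratios really combine into the factors of \eqref{eq:ceva2} (a permutation or inversion would be harmless anyway, the assertion being ``$=1$''); noting that purity of the triangle is used twice over, both to guarantee that all three bisectors exist (via Proposition 2) and to keep every length $|\vec a|,|\vec b|,|\vec c|,|\overrightarrow{BD}|,\dots$ nonzero so the ratios make sense; and observing that the ``angle bisector from $A$'' of the statement is the angle bisector of $\vec b$ and $\vec c$ in the sense required by the bisector theorem, which is immediate from the convention $\vec a+\vec b+\vec c=0$.
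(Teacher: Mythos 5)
Your proposal is correct and follows essentially the same route as the paper: apply the Lorentzian bisector theorem at each vertex, multiply the three ratios, and invoke the converse of Ceva's theorem via \eqref{eq:ceva2}. If anything, your bookkeeping is more careful than the paper's, whose displayed product of ratios contains a typo and does not literally telescope to $1$ as written, whereas yours does.
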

\begin{proof}
  Note that since $\overrightarrow{AD}$, $\overrightarrow{BE}$, and $\overrightarrow{CF}$ are angle bisectors, \eqref{eq:bisector} implies that, \eqref{eq:ceva2} is equal to
  $$\frac{|\vec b|^2}{|\vec a|^2}\cdot
\frac{|\vec c|^2}{|\vec b|^2}\cdot
\frac{|\vec b|^2}{|\vec c|^2} = 1.$$
  Therefore, by the converse of Ceva's theorem, $AD$, $BE$ and $CF$ are concurrent.
\end{proof}

It can be checked via simple calculation that the distances from the point of concurrency to the sides are all equal. However, the calculation is tedius and long, which can be done by a computer software. We leave the search a more elegant proof for future studies and accept that the point of concurrency found above is the incenter of the given triangle. Figure \ref{fig:lincenter} depicts the angle bisectors and the incircle of triangle $ABC$.

\begin{figure}[ht]
    \centering
    \includegraphics[width=0.8\textwidth]{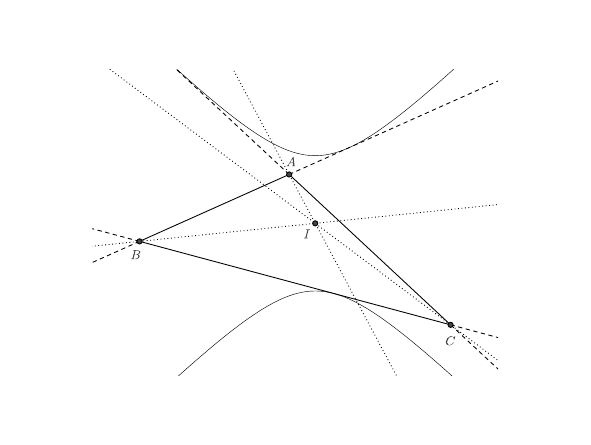}
    \caption{Angle bisectors and incenter of a pure triangle\label{fig:lincenter}.}
\end{figure}

\section{Isogonal conjugate of a Lorentzian triangle center}
A perhaps more complex application of angle bisectors is the concept of isogonal conjugacy. First we depict isogonal conjugacy in Euclidean plane through Figure \ref{fig:eisogonal}. Given a point $P$, we construct three cevians such that each line passes through a vertex and the given point. If the cevians are reflected across their respective angle bisectors, then the new cevians be concurrent at a point, $P'$. Such $P'$ is called the \emph{isogonal conjugate} of $P$. Note that in Figure \ref{fig:eisogonal}, to each angle bisectors, there are two cevians that are reflections across the angle bisector.

\begin{figure}[ht]
    \centering
    \includegraphics[width=0.8\textwidth]{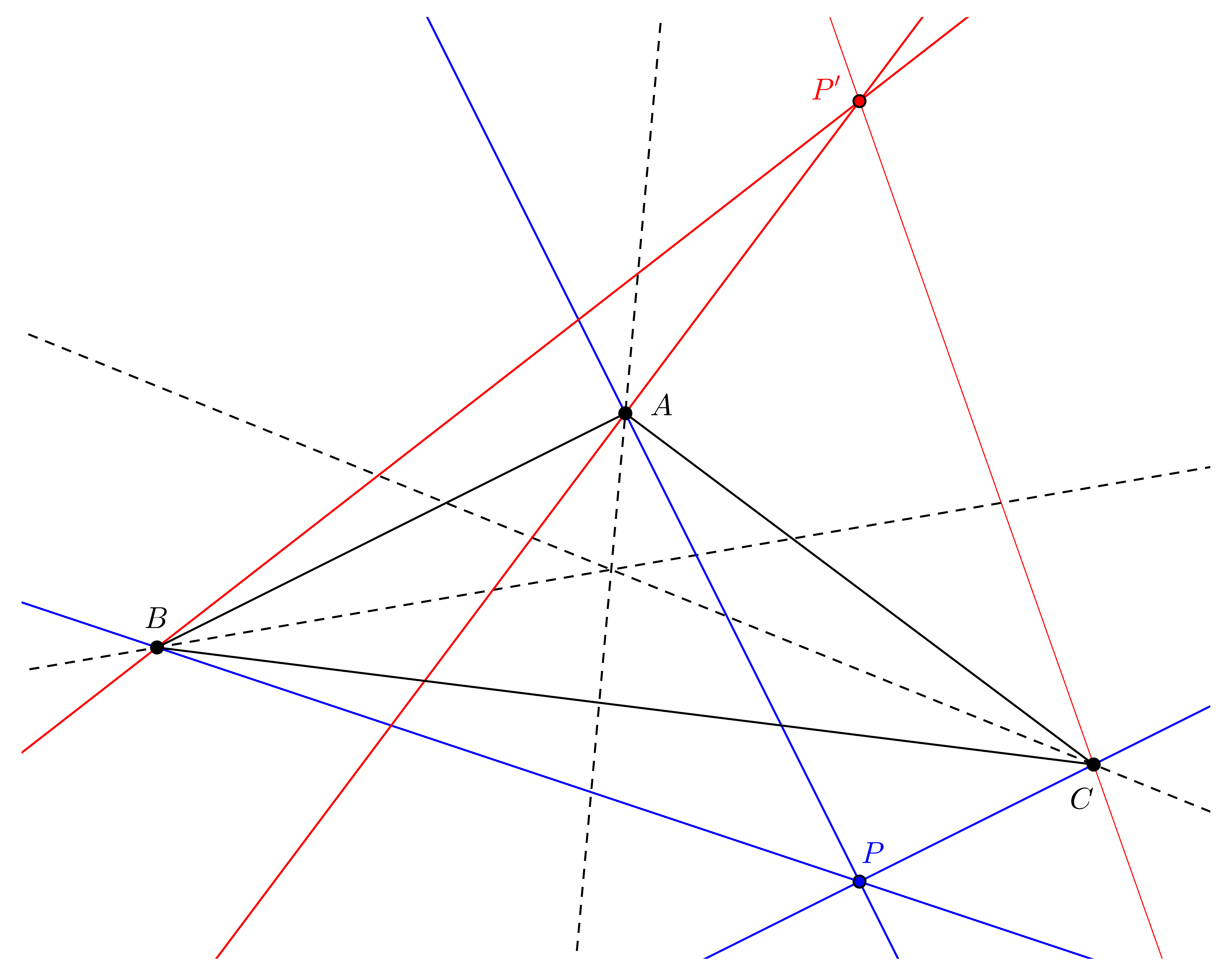}
    \caption{Isogonal conjugate $P'$ of $P$ in Euclidean plane\label{fig:eisogonal}.}
\end{figure}

We want to prove that isogonal conjugates also exist in Lorentzian plane geometry. However, the process of locating isogonal conjugate involves reflection across a line. In Lorentzian plane, reflections across a given line may not exist; to be precise, we cannot find an isometry that involves a reflection across a lightlike line. Nevertheless, as mentioned earlier, angle bisectors are either timelike or spacelike, allowing us to discuss reflections across angle bisectors.

Note that following the terminology from Euclidean case, if two cevians are reflections of each other across an angle bisector, then we say that the cevians are \emph{isogonal}. Again, since we need to make use of angle bisectors, we must impose the restriction that the triangles in consideration must be pure triangles. In proving the concurrency of the reflected cevians, the following lemma is crucial. Figure \ref{fig:lemma} explains the notations concerning lemma.
\newpage
\begin{lemma}
  Given a triangle $ABC$, let $D$ and $E$ be on $BC$ such that $AD$ and $AE$ are isogonals. Furthermore, let $\overrightarrow{AF} = \vec w$ be the angle bisector of $\vec b$ and $\vec c$. Then,
  $$\frac{|\overrightarrow{BD}|^2}{|\overrightarrow{DC}|^2}\cdot
  \frac{|\overrightarrow{BE}|^2}{|\overrightarrow{EC}|^2}
  = \frac{|\vec c|^4}{|\vec b|^4}.$$
\end{lemma}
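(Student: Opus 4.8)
The plan is to imitate the Euclidean proof of this lemma, which rests on the generalized angle bisector theorem (the ``ratio lemma''). The first step is therefore to establish a Lorentzian ratio lemma: if $D$ is any point on line $BC$ for which the cevian $AD$ is non-null (so that the triangles invoked below have no null edges) and $\vec u = \overrightarrow{AD}$, then
$$\frac{|\overrightarrow{BD}|^2}{|\overrightarrow{DC}|^2} = \frac{|\vec c|^2}{|\vec b|^2}\cdot\frac{S^2_{cu}}{S^2_{bu}}.$$
This is proved exactly as the Lorentzian bisector theorem, with $\vec w$ replaced by the general cevian $\vec u$: applying the Lorentzian law of sines to triangles $ABD$ and $ACD$ gives
$$\frac{|\overrightarrow{DC}|^2}{|\vec u|^2} = \frac{S^2_{bu}}{S^2_{DCb}}\quad\text{and}\quad\frac{|\vec u|^2}{|\overrightarrow{BD}|^2} = \frac{S^2_{BDc}}{S^2_{cu}},$$
and one then uses $S^2_{DCb} = S^2_{ab}$ and $S^2_{BDc} = S^2_{ac}$ (since $\overrightarrow{DC}$ and $\overrightarrow{BD}$ are scalar multiples of $\vec a$) together with the law of sines for $ABC$ in the form $S^2_{ac}/S^2_{ab} = |\vec b|^2/|\vec c|^2$. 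Taking $\vec u = \vec w$ recovers the bisector theorem, as a sanity check.

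Writing $\vec u = \overrightarrow{AD}$ and $\vec v = \overrightarrow{AE}$, I would apply the ratio lemma to each and multiply, obtaining
$$\frac{|\overrightarrow{BD}|^2}{|\overrightarrow{DC}|^2}\cdot\frac{|\overrightarrow{BE}|^2}{|\overrightarrow{EC}|^2} = \frac{|\vec c|^4}{|\vec b|^4}\cdot\frac{S^2_{cu}\,S^2_{cv}}{S^2_{bu}\,S^2_{bv}},$$
so the lemma reduces to the single identity $S^2_{cu}\,S^2_{cv} = S^2_{bu}\,S^2_{bv}$. I would in fact prove the stronger pair $S^2_{cu} = S^2_{bv}$ and $S^2_{bu} = S^2_{cv}$, the Lorentzian analogue of the equal-angle relations that appear in the Euclidean argument.

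To establish these, I would use the reflection across the bisector. Since $\vec w$ is timelike or spacelike, $\vec w\circ\vec w\neq 0$, and $R(\vec x) = \frac{2(\vec x\circ\vec w)}{\vec w\circ\vec w}\,\vec w - \vec x$ defines a Lorentzian isometry with $R(\vec w) = \vec w$ and $\vec x + R(\vec x)\parallel\vec w$ for all $\vec x$. Let $\vec\alpha$, $\vec\beta$ be the unit vectors of $\vec b$, $\vec c$; by Proposition 2 and the fact that all angle bisectors of a given pair are parallel, $\vec\alpha + \vec\beta\parallel\vec w$, while the bisector equation gives $\vec\alpha\circ\vec w = \vec\beta\circ\vec w$. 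Then $R(\vec\alpha)\circ\vec w = R(\vec\alpha)\circ R(\vec w) = \vec\alpha\circ\vec w = \vec\beta\circ\vec w$, so $\vec\beta - R(\vec\alpha)$ is perpendicular to $\vec w$; it is also parallel to $\vec w$, since $\vec\beta - R(\vec\alpha) = (\vec\alpha + \vec\beta) - (\vec\alpha + R(\vec\alpha))$. As $\vec w$ is non-null, a nonzero vector cannot be simultaneously parallel and perpendicular to it, so $R(\vec\alpha) = \vec\beta$; that is, $R$ interchanges the directions of $\vec b$ and $\vec c$. Since $AD$ and $AE$ are isogonal, $R$ carries the line $AD$ to the line $AE$, hence $R(\vec u)$ is parallel to $\vec v$. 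Because $S^2$ is invariant under the isometry $R$ and depends on its arguments only through their directions, $S^2_{cu} = S^2_{R(\vec c)\,R(\vec u)} = S^2_{bv}$ and likewise $S^2_{bu} = S^2_{cv}$; multiplying gives $S^2_{cu}\,S^2_{cv} = S^2_{bu}\,S^2_{bv}$, as needed.

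The main obstacle is the reflection step, where two points require care: first, that reflection across the bisector line genuinely swaps the directions of $\vec b$ and $\vec c$ --- here it is essential that the bisector be non-null, both so that $R$ is a bona fide isometry and so that being ``parallel and perpendicular to $\vec w$'' forces a vector to vanish; and second, the invariance of the quantity $S^2$ under Lorentzian isometries and under rescaling of its arguments, which is also what underlies the identities $S^2_{DCb} = S^2_{ab}$ and $S^2_{BDc} = S^2_{ac}$ used in the ratio lemma.
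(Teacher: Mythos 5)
Your proof is correct, and its skeleton coincides with the paper's: both arguments use the Lorentzian law of sines on the four sub-triangles (your ``ratio lemma'' is exactly the content of \eqref{eq:lma6}, \eqref{eq:lma7}, and \eqref{eq:lma8}, packaged as a standalone statement) to reduce everything to the two identities $S^2_{ADc}=S^2_{AEb}$ and $S^2_{ADb}=S^2_{AEc}$. Where you genuinely diverge is in how those identities are obtained. The paper stays algebraic: from \eqref{eq:lma1} and \eqref{eq:lma2} the two difference vectors lie in the one-dimensional orthogonal complement of $\vec w$, hence are proportional, and expanding the self-inner-product of the rearranged proportionality in two different ways yields \eqref{eq:lma4} and \eqref{eq:lma5}. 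You instead build the explicit reflection $R(\vec x) = \frac{2(\vec x\circ\vec w)}{\vec w\circ\vec w}\vec w - \vec x$, verify it is an isometry fixing $\vec w$, show it swaps the directions of $\vec b$ and $\vec c$ and --- by the paper's own definition of isogonal cevians as reflections across the bisector --- the directions of $AD$ and $AE$, and finish by the invariance of $S^2$ under isometries and rescaling. Your route is more geometric and makes the paper's earlier remark, that a non-null bisector guarantees a reflection isometry, do actual work; it also uses the reflection definition of ``isogonal'' directly, whereas the paper silently substitutes the inner-product condition \eqref{eq:lma2} for it (your parallel-and-perpendicular argument in fact shows the two formulations are equivalent, closing a small gap the paper leaves open). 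The paper's route is shorter on the page and avoids introducing the reflection map, at the cost of a somewhat unmotivated polarization trick. Both arguments need $\vec w$ and the cevians to be non-null so that the reflection exists and the law of sines applies; you flag this explicitly, the paper leaves it implicit.
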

\begin{proof}
  Since $AE$ is the angle bisector of $\vec b$ and $\vec c$,
  $$\frac{\vec c\circ\vec w}{\sqrt{|\vec c \circ\vec c|}}
     = \frac{\vec b\circ\vec w}{\sqrt{|\vec b \circ\vec b|}}$$
  which can be rewritten as
  \begin{equation}\label{eq:lma1}
    \left(\frac{\vec c}{\sqrt{|\vec c \circ\vec c|}}
     - \frac{\vec b}{\sqrt{|\vec b \circ\vec b|}}\right)\circ\vec w = 0.
  \end{equation}
  Similarly, since $AD$ and $AE$ are isogonal,
  \begin{equation}\label{eq:lma2}
    \left(\frac{\overrightarrow{AD}}{\sqrt{|\overrightarrow{AD}\circ\overrightarrow{AD}|}}
     - \frac{\overrightarrow{AE}}{\sqrt{|\overrightarrow{AE}\circ\overrightarrow{AE}|}}\right)\circ\vec w = 0.
  \end{equation}
  Therefore, \eqref{eq:lma1} and \eqref{eq:lma2} implies that
  \begin{equation}\label{eq:lma3}
    \left(\frac{\overrightarrow{AD}}{\sqrt{|\overrightarrow{AD}\circ\overrightarrow{AD}|}}
     - \frac{\overrightarrow{AE}}{\sqrt{|\overrightarrow{AE}\circ\overrightarrow{AE}|}}\right) =
     s\left(\frac{\vec c}{\sqrt{|\vec c \circ\vec c|}}
     - \frac{\vec b}{\sqrt{|\vec b \circ\vec b|}}\right)
  \end{equation}
  for some non-zero real $s$. By rewriting \eqref{eq:lma3} and taking the inner product of itself, we find out that
  \begin{align*}
    \frac{\overrightarrow{AD}\circ\overrightarrow{AD}}{|\overrightarrow{AD}\circ\overrightarrow{AD}|}
  &-2s\frac{\overrightarrow{AD}\circ\vec c}{\sqrt{|\overrightarrow{AD}\circ\overrightarrow{AD}||\vec c \circ\vec c|}}
  +s^2\frac{\vec c \circ\vec c}{|\vec c \circ\vec c|}\\
  &=\frac{\overrightarrow{AE}\circ\overrightarrow{AE}}{|\overrightarrow{AE}\circ\overrightarrow{AE}|}
  -2s\frac{\overrightarrow{AE}\circ\vec b}{\sqrt{|\overrightarrow{AE}\circ\overrightarrow{AE}||\vec b \circ\vec b|}}
  +s^2\frac{\vec b \circ\vec b}{|\vec b \circ\vec b|}.
  \end{align*}
  Since $\overrightarrow{AD}$ and $\overrightarrow{AE}$, $\vec b$ and $\vec c$ share the same causal character pairwise, the above equation simplies to
  \begin{equation}\label{eq:lma4}
    S^2_{ADc} = S^2_{AEb}.
  \end{equation}
  Using similar technique after rewriting \eqref{eq:lma3} in a different way from above tells us that
  \begin{equation}\label{eq:lma5}
    S^2_{ADb} = S^2_{AEc}.
  \end{equation}
  On the other hand, since $\overrightarrow{BD}$, $\overrightarrow{BE}$, $\overrightarrow{DC}$, and $\overrightarrow{EC}$ are all scalar multiples of $\vec a$,
  \begin{equation}\label{eq:lma8}
    S^2_{BDc} = S^2_{BEc} = S^2_{ac}\text{ and }
    S^2_{DCb} = S^2_{ECb} = S^2_{ab}
  \end{equation}
  Now apply law of sines to triangles $ABD$ and $ADC$ to get
  $$\frac{|\overrightarrow{BD}|^2}{|\overrightarrow{AD}|^2}
  = \frac{S^2_{ADc}}{S^2_{BDc}}
  \text{ and }
  \frac{|\overrightarrow{AD}|^2}{|\overrightarrow{DC}|^2}
  = \frac{S^2_{CDb}}{S^2_{ADb}}$$
  which in turn implies that
  \begin{equation}\label{eq:lma6}
    \frac{|\overrightarrow{BD}|^2}{|\overrightarrow{DC}|^2}
    =\frac{S^2_{ADc}}{S^2_{BDc}}\cdot\frac{S^2_{CDb}}{S^2_{ADb}}.
  \end{equation}
  Similarly, applying law of sines to triangles $ABE$ and $AEC$ shows us that
  \begin{equation}\label{eq:lma7}
    \frac{|\overrightarrow{BE}|^2}{|\overrightarrow{EC}|^2}
    =\frac{S^2_{AEc}}{S^2_{BEc}}\cdot\frac{S^2_{CEb}}{S^2_{AEb}}.
  \end{equation}
  Finally, \eqref{eq:lma4}, \eqref{eq:lma5}, and \eqref{eq:lma8} implies that
  $$\frac{|\overrightarrow{BD}|^2}{|\overrightarrow{DC}|^2}
  \cdot \frac{|\overrightarrow{BE}|^2}{|\overrightarrow{EC}|^2}
  =\frac{(S^2_{ab})^2}{(S^2_{ac})^2} = \frac{|\vec c|^4}{|\vec b|^4}$$
  where the last inequality comes from applying law of sines to the triangle $ABC$.
\end{proof}
\begin{figure}[ht]
    \centering
    \includegraphics[width=0.8\textwidth]{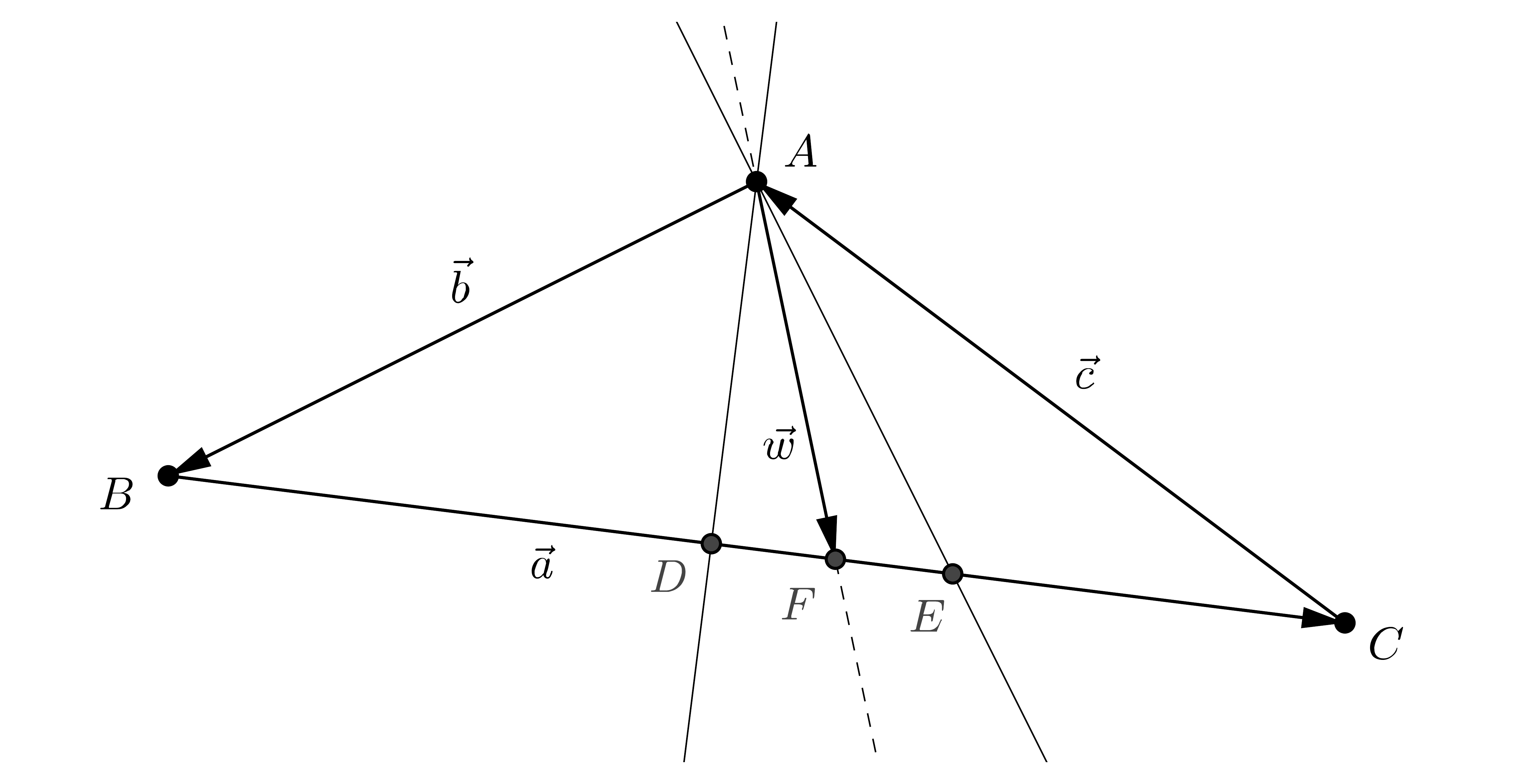}
    \caption{Pair of isogonal cevians $AD$ and $AE$ and the angle bisector $AF$\label{fig:lemma}.}
\end{figure}

Now we may proceed to prove that isogonal conjugates exist in Lorentzian plane for pure triangles. The notations for the following theorem are represented in Figure \ref{fig:lisogonal}.

\begin{figure}[ht]
    \centering
    \includegraphics[width=0.8\textwidth]{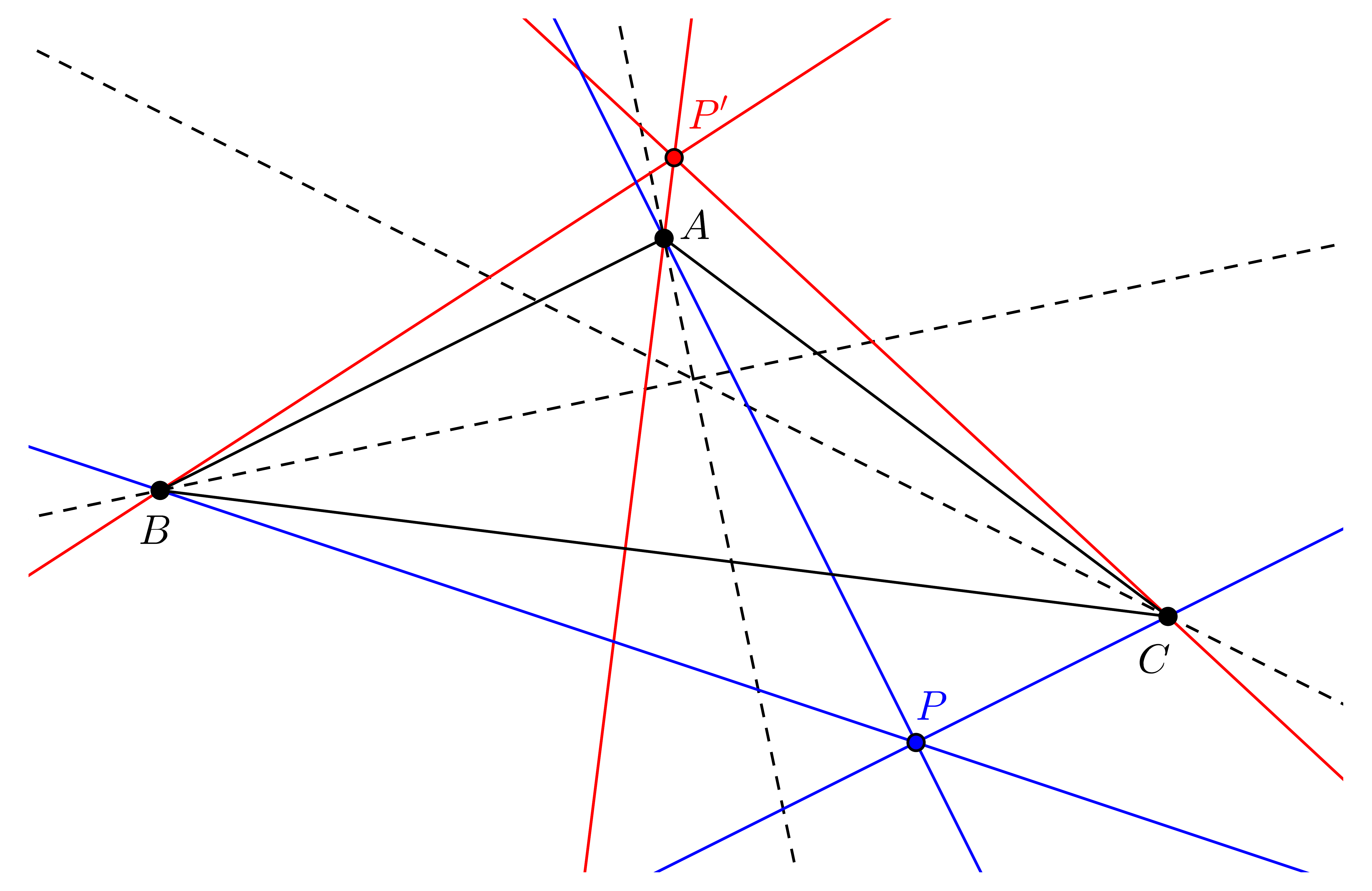}
    \caption{Isogonal conjugate $P'$ of $P$ in Lorentzian plane\label{fig:lisogonal}.}
\end{figure}
\newpage
\begin{theorem}[Isogonal conjugate theorem]
  Let triangle $ABC$ be given. Pick any point $P$ such that the cevians can be drawn from each vertex onto the opposite side. Let $D$, $E$, and $F$ be the intersections between $BC$ and $AP$, $CA$ and $BP$, $AB$ and $CP$, respectively. If $D'$, $E'$, $F'$ are points on $BC$, $CA$, and $AB$ such that $AD$ and $AD'$, $AE$ and $AE'$, $AF$ and $AF'$ are isogonal respectively, then $AD'$, $AE'$ and $AF'$ are concurrent.
\end{theorem}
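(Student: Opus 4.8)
\emph{Proof proposal.} The plan is to copy the Euclidean argument line for line: the cevians $AD$, $BE$, $CF$ pass through $P$ and hence satisfy Ceva's relation, and combining this with one application of the Lemma at each of the three vertices forces the reflected cevians $AD'$, $BE'$, $CF'$ to satisfy Ceva's relation as well, so they are concurrent by the converse of Ceva's theorem. (Here the statement is to be read with the cevians $BE$, $CF$ emanating from $B$ and $C$, and the conclusion is that $AD'$, $BE'$, $CF'$ are concurrent.)

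First I would write Ceva's theorem in the squared form \eqref{eq:ceva2} for the concurrent cevians through $P$,
$$\frac{|\overrightarrow{AF}|^2}{|\overrightarrow{FB}|^2}\cdot\frac{|\overrightarrow{BD}|^2}{|\overrightarrow{DC}|^2}\cdot\frac{|\overrightarrow{CE}|^2}{|\overrightarrow{EA}|^2}=1,$$
the squared form being the natural one here since the Lemma produces squared length ratios and since \eqref{eq:ceva1} and \eqref{eq:ceva2} are equivalent for a pure triangle.

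Next I would apply the Lemma at each vertex. At $A$, the isogonal pair $AD$, $AD'$ relative to the bisector of $\vec b$ and $\vec c$ gives
$$\frac{|\overrightarrow{BD}|^2}{|\overrightarrow{DC}|^2}\cdot\frac{|\overrightarrow{BD'}|^2}{|\overrightarrow{D'C}|^2}=\frac{|\vec c|^4}{|\vec b|^4};$$
cyclically, the isogonal pair $BE$, $BE'$ at $B$ gives $\frac{|\overrightarrow{CE}|^2}{|\overrightarrow{EA}|^2}\cdot\frac{|\overrightarrow{CE'}|^2}{|\overrightarrow{E'A}|^2}=\frac{|\vec a|^4}{|\vec c|^4}$, and the pair $CF$, $CF'$ at $C$ gives $\frac{|\overrightarrow{AF}|^2}{|\overrightarrow{FB}|^2}\cdot\frac{|\overrightarrow{AF'}|^2}{|\overrightarrow{F'B}|^2}=\frac{|\vec b|^4}{|\vec a|^4}$. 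Multiplying the three identities, the right-hand sides telescope to $1$, while the product of the unprimed ratios on the left is exactly the Ceva product for $P$, hence $1$; therefore
$$\frac{|\overrightarrow{AF'}|^2}{|\overrightarrow{F'B}|^2}\cdot\frac{|\overrightarrow{BD'}|^2}{|\overrightarrow{D'C}|^2}\cdot\frac{|\overrightarrow{CE'}|^2}{|\overrightarrow{E'A}|^2}=1.$$
This is \eqref{eq:ceva2} for the points $D'$, $E'$, $F'$, so the converse of Ceva's theorem yields that $AD'$, $BE'$, $CF'$ are concurrent.

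The one part requiring care is the cyclic bookkeeping: in each use of the Lemma one has to track which of the two side-lengths ends up in the numerator and which in the denominator, so that the three right-hand sides really do cancel, and one has to match the orientation of the foot-ratios to the ordering in \eqref{eq:ceva1}. No conceptual obstacle remains, since reflection across an angle bisector is always legitimate for a pure triangle (its bisectors are timelike or spacelike, never lightlike); the Lemma carries the entire load and the theorem drops out as a purely formal manipulation, exactly as in the Euclidean setting.
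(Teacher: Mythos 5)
Your proposal is correct and follows essentially the same route as the paper's own proof: apply the Lemma once at each vertex, multiply the three identities so the right-hand sides telescope to $1$, cancel the Ceva product for the concurrent cevians through $P$, and invoke the converse of Ceva's theorem. Your reading of the statement with the cevians $BE$, $CF$ (and conclusion for $AD'$, $BE'$, $CF'$) is also the reading the paper's proof actually uses.
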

\begin{proof}
  By the above lemma,
  \begin{align*}
    \frac{|\overrightarrow{BD}|^2}{|\overrightarrow{DC}|^2}
    \cdot\frac{|\overrightarrow{BD'}|^2}{|\overrightarrow{D'C}|^2}
    = \frac{|\vec c|^4}{|\vec b|^4},
    \frac{|\overrightarrow{CE}|^2}{|\overrightarrow{EA}|^2}
    \cdot\frac{|\overrightarrow{CE'}|^2}{|\overrightarrow{E'A}|^2}
    = \frac{|\vec a|^4}{|\vec c|^4},\text{ and }
    \frac{|\overrightarrow{AF}|^2}{|\overrightarrow{FB}|^2}
    \cdot\frac{|\overrightarrow{AF'}|^2}{|\overrightarrow{F'B}|^2}
    = \frac{|\vec b|^4}{|\vec a|^4}
  \end{align*}
  implying that
  $$\frac{|\overrightarrow{BD}|^2}{|\overrightarrow{DC}|^2}
    \cdot\frac{|\overrightarrow{BD'}|^2}{|\overrightarrow{D'C}|^2}
    \cdot\frac{|\overrightarrow{CE}|^2}{|\overrightarrow{EA}|^2}
    \cdot\frac{|\overrightarrow{CE'}|^2}{|\overrightarrow{E'A}|^2}
    \cdot\frac{|\overrightarrow{AF}|^2}{|\overrightarrow{FB}|^2}
    \cdot\frac{|\overrightarrow{AF'}|^2}{|\overrightarrow{F'B}|^2} = 1.$$
  However, since $AD$, $BE$, and $CF$ are concurrent at point $P$, by Ceva's theorem,
  $$\frac{|\overrightarrow{BD}|^2}{|\overrightarrow{DC}|^2}
    \cdot\frac{|\overrightarrow{CE}|^2}{|\overrightarrow{EA}|^2}
    \cdot\frac{|\overrightarrow{AF}|^2}{|\overrightarrow{FB}|^2} = 1.$$
  Therefore,
  $$\frac{|\overrightarrow{BD'}|^2}{|\overrightarrow{D'C}|^2}
    \cdot\frac{|\overrightarrow{CE'}|^2}{|\overrightarrow{E'A}|^2}
    \cdot\frac{|\overrightarrow{AF'}|^2}{|\overrightarrow{F'B}|^2} = 1$$
  and applying the converse of Ceva's theorem gives us the desired result.
\end{proof}

The point of concurrency of the new cevians is called the \emph{isogonal conjugate} of the point $P$. Note that the triangle $ABC$ and point $P$ are identical in Figure \ref{fig:eisogonal} and Figure \ref{fig:lisogonal}. Expectedly, it is evident that the angle bisectors and the reflected cevians are different, yet in each case the new cevians are concurrent. Now that we have proven the existence of isogonal conjugates, the existence of \emph{Lemoine point} or \emph{symmedian point} for pure triangles in Lorentzian plane follows immediately, since by definition, Lemoine point is the isogonal conjugate of centroid.

\section{Conclusion}
In Lorentzian plane, discussion of angles is complicated by the fact that a rotation matrix might not exist for any given pair of unit vectors. In fact, if angles are defined using the Lorentzian rotation matrix, then every triangle will have at least one undefined angle. Therefore, it is difficult to utilze angle bisectors in discussing attributes of triangles in Lorentzian plane.

However, by adopting a slightly modified definition of angle bisectors introduced in this article, one that extends the natural definition of angle bisectors where angles are defined, then we are able to analyze pure triangles using angle bisectors. Therefore, the discussion on existence of incenters and isogonal conjugates naturally follows, and similar to the Euclidean counterpart, existence of both elements is guaranteed for pure triangles. Also, the existence of Lemoine point follows directly as a result.

The theory of isogonal conjugates have other interesting properties in that several well-known triangle centers are isogonal conjugates of each other. In Euclidean plane, the circumcenter and the orthocenter are a known example. Also the mittenpukt is defined as the symmedian point of the excentral triangle. Future studies can further develop the theory of isogonal conjugates by translating the facts such as above from Euclidean plane to the Lorentzian plane.

\end{document}